\def\titlerunning#1{\gdef\titrun{#1}}
\def\author#1{\gdef\autrun{\def\and{\unskip, }#1}\gdef\@author{#1}}
\def\address#1{{\def\and{\\\hspace*{18pt}}\renewcommand{\thefootnote}{}%
\footnote {#1}}%
\markboth{\autrun}{\titrun}}
\def\email#1{\hspace*{4pt}{\em e-mail}: #1}
\def\MSC#1{{\renewcommand{\thefootnote}{}%
\footnote{\emph{Mathematics Subject Classification (2010):} #1}}}
\def\keywords#1{\par\medskip
\noindent\textbf{Keywords:} #1}
\newtheorem{theorem}{Theorem}[section]
\newtheorem{prop}[theorem]{Proposition}
\newtheorem{cor}[theorem]{Corollary}
\newtheorem{lemma}[theorem]{Lemma}
\theoremstyle{definition}
\newtheorem{remark}[theorem]{Remark}
\numberwithin{equation}{section}
\def\cA{\mathcal A}
\def\cC{\mathcal C}
\def\cE{\mathcal E}
\def\cF{\mathcal F}
\def\cH{\mathcal H}
\def\cI{\mathcal I}
\def\cO{\mathcal O}
\def\cD{\mathcal D}
\def\cS{\mathcal S}
\def\PG{{\rm PG}}
\def\GF{{\rm GF}}
\def\PGL{{\rm PGL}}
\def\GL{{\rm GL}}
\def\Fq{{\mathbb{F}_q}}
\def\Fsqrtq{{\mathbb{F}_{\sqrt{q}}}}
\newcommand{\todo}[1]{{}}
\newcommand{\todol}[1]{{}}
\newcommand{\todof}[1]{{}}
\begin{document}


\baselineskip=16pt

\titlerunning{}

\title{On the independence number of graphs related to a polarity}

\author{Sam Mattheus
\and
Francesco Pavese 
\and
Leo Storme}

\date{}

\maketitle

\address{S. Mattheus: Department of Mathematics, Vrije Universiteit Brussel, Pleinlaan 2, 1050 Brussel, Belgium; \email{sam.mattheus@vub.ac.be} 
\and 
F. Pavese: Dipartimento di Meccanica, Matematica e Management, Politecnico di Bari, Via Orabona 4, 70125 Bari, Italy; \email{francesco.pavese@poliba.it}
\and
L. Storme: Department of Mathematics, Ghent University, Krijgslaan 281, 9000 Ghent, Belgium; \email{leo.storme@ugent.be}
}

\bigskip

\MSC{Primary 05C50; Secondary 05B25 05C69 05C35}


\begin{abstract}
We investigate the independence number of two graphs constructed from a polarity of $\PG(2,q)$. For the first graph under consideration, the Erd\H os--R\'enyi graph $ER_q$, we provide an improvement on the known lower bounds on its independence number. In the second part of the paper we consider the Erd\H os--R\'enyi hypergraph of triangles $\cH_q$. We determine the exact magnitude of the independence number of $\cH_q$, $q$ even. This solves a problem posed by Mubayi and Williford in \cite[Open Problem 3]{MW}. 

\keywords{Erd\H os--R\'enyi graph, independence number}
\end{abstract}

\section{Introduction}

An {\em independent set} (or a {\em coclique}) in a graph or hypergraph $\Gamma$ is a set of pairwise non-adjacent vertices. The {\em independence number} of $\Gamma$, denoted by $\alpha(\Gamma)$, is the size of the largest independent set in $\Gamma$. The aim of this paper is to improve the known lower bounds on the independence number of the Erd\H os--R\'enyi graph $ER_q$. We also consider a related hypergraph obtained from the Erd\H os--R\'enyi graph. 

Let $q$ be a prime power and let $\PG(n,q)$ be the $n$--dimensional projective space over the finite field $\Fq$. A {\em polarity} $\rho$ of the projective space $\PG(n,q)$ is an involutory bijective map sending points to hyperplanes and hyperplanes to points which reverses incidence. Two distinct points $P_1, P_2$ of $\PG(n,q)$ are said to be {\em conjugate} with respect to $\rho$ if $P_1 \in \rho(P_2)$. A point $P$ is called {\em absolute} with respect to $\rho$ if $P \in \rho(P)$. The {\em polarity graph} of $\PG(n,q)$ with respect to a polarity $\rho$ is the simple graph $(V,E)$ with vertex set equal to the set of points of $\PG(n,q)$ and such that for two distinct points $P_1, P_2$, we have that $\{ P_1, P_2 \} \in E$ if and only if $P_1 \in \rho(P_2)$. 
The projective space $\PG(n,q)$ is known to have the orthogonal polarity if $q$ is odd (which in turn, if $n$ is odd, can be either hyperbolic or elliptic), the pseudo polarity if $q$ is even, the symplectic polarity if $n$ is odd and the unitary polarity if $q$ is a square. Any other polarity of $\PG(n,q)$ is projectively equivalent to one of these, see \cite[Table 2.1]{H}.

The Erd\H os--R\'enyi graph $ER_q$ is either the orthogonal or the pseudo polarity graph of $\PG(2,q)$, according as $q$ is odd or even, respectively. It was introduced by Erd\H os and R\'enyi in \cite{ER} (and independently by Brown in \cite{B}) to solve a problem in extremal graph theory. Recall that if $G$ is a graph, then $\mathrm{ex}(n,G)$ denotes the largest number of edges a graph on $n$ vertices can have without containing $G$ as a subgraph. Any graph on $n$ vertices with $\mathrm{ex}(n,G)$ edges and which has no copy of $G$ as a subgraph is called {\em extremal}. Of particular interest is the behavior of $\mathrm{ex}(n,C_4)$ where $C_4$ denotes the cycle of length $4$. In \cite{ERS}, Erd\H os, R\'enyi and S\'os proved that $\mathrm{ex}(n,C_4) \sim \frac{1}{2} n^{3/2}$ using the graphs $ER_q$ for constructive lower bounds. F\"uredi later demonstrated in \cite{F1} and \cite{F2} that the graphs $ER_q$ are extremal when $q$ is even or $q > 13$. The graph $ER_q$ has also been used to solve a similar problem for hypergraphs (see \cite{LV}). These hypergraphs will be dealt with in the last section.

As pointed out by Mubayi and Williford in \cite{MW}, the question of determining the independence number of a polarity graph can be phrased as a simple question in finite geometry which seems interesting in itself: 

{\em Let $\rho$ be a polarity of $\PG(n,q)$, what is the maximum number of mutually non--conjugate points of $\PG(n,q)$ with respect to $\rho$?}

The next result summarizes lower and upper bounds on the size of the independence number of $ER_q$ studied in \cite{HW, MW}.
$$
\alpha(ER_q) \le
\begin{cases} 
q^{3/2}+\sqrt{q}+1 & \mbox{for all } q \\
q^{3/2}-q+\sqrt{q}+1 & \mbox{if } q \mbox{ is an even square}
\end{cases}
$$
$$
\alpha(ER_q) \ge
\begin{cases} 
\frac{q^{3/2}+q+2}{2} & \mbox{if } q \mbox{ is an odd square}\\
\frac{120 q^{3/2}}{73 \sqrt{73}} & \mbox{if } q \mbox{ is an odd non--square} \\
q^{3/2}-q+\sqrt{q} & \mbox{if } q \mbox{ is an even square} \\
\frac{q^{3/2}}{2 \sqrt{2}} & \mbox{if } q \mbox{ is an even non--square}
\end{cases}
$$
In this paper we provide an improvement on the lower bound: 
$$
\alpha(ER_q) \ge
\begin{cases} 
\frac{q^{3/2}-\sqrt{q}}{2} + q + 1 & \mbox{if } q \mbox{ is an odd square and }  \sqrt{q} \equiv -1 \pmod 4 \hspace{.5cm} \mbox{ (Corollary \ref{cor1})}\\
\frac{q^{3/2}+3 q}{2} + 1 & \mbox{if } q \mbox{ is an odd square and } \sqrt{q} \equiv 1 \pmod 4 \hspace{.8cm}\mbox{ (Corollary \ref{cor2})} \\
\frac{q^{3/2}}{\sqrt{2}} - q + \sqrt{\frac{q}{2}} & \mbox{if } q \mbox{ is an even non--square} \hspace{3.8cm}\mbox{(Corollary \ref{coreven})}
\end{cases}
$$

In the last section we consider the  Erd\H os--R\'enyi hypergraph of triangles $\cH_q$. This hypergraph $\cH_q$ is the $3$--graph whose vertex set is the set of non--absolute points of $V(ER_q)$ and whose edge set is the set of triangles in $ER_q$. It follows from the definition that $\alpha(\cH_q)$ is the order of the largest triangle--free induced subgraph of $ER_q$ which contains no absolute points. In \cite{P}, Parsons constructs a triangle--free induced subgraph of $ER_q$, $q$ odd, which contains no absolute points and has either $q(q+1)/2$ or $q(q-1)/2$ vertices according as $q \equiv -1\pmod 4$ or $q \equiv 1\pmod 4$, respectively. We will show the existence of a triangle--free induced subgraph of $ER_q$, $q$ even, which contains no absolute points and has $q(q+1)/2$ vertices. With this result we establish that the bound determined in \cite[Theorem 8]{MW} is essentially tight.

\section{On the independence number of $ER_q$}

In the following, the Desarguesian plane $\PG(2,q)$ is represented via homogeneous coordinates over the Galois field $\Fq$, i.e., represent the points of $\PG(2,q)$ by $\langle (x, y, z) \rangle$, $x,y,z \in \Fq$ and $(x, y, z) \neq (0, 0, 0)$, and similarly lines by $\langle [a, b, c] \rangle$, $a,b,c \in \Fq$ and $[a, b, c] \neq [0, 0, 0]$.
Incidence is given by $ax+by+cz=0$. To avoid awkward notation the angle brackets will be dropped. The point $U_i$ is the point with $1$ in the $i$-th position and $0$ elsewhere. A {\em quadric} of $\PG(2,q)$ is the locus of zeros of a quadratic polynomial, say $a_{11} X_1^2 + a_{22} X_2^2 + a_{33} X_3^2 + a_{12} X_1 X_2 + a_{13} X_1 X_3 + a_{23} X_2 X_3$. There exist four kinds of quadrics in $\PG(2,q)$, three of which are degenerate (splitting into lines, which could be in the plane $\PG(2,q^2)$) and one of which is non--degenerate \cite{H}. In what follows we will use the term {\em conic} to refer to a non--degenerate quadric of $\PG(2,q)$. In $\PG(2,q)$, a line meets a conic in either $0$, $1$ or $2$ points. A line is called either {\em external}, {\em tangent} or {\em secant}, according as it contains $0$, $1$ or $2$ points of the conic. If $q$ is even the tangent lines are concurrent at a point, called the {\em nucleus} of the conic. If $q$ is odd, the set of absolute points of an orthogonal polarity of $\PG(2,q)$ forms a conic. Vice versa, when $q$ is odd, a conic defines an orthogonal polarity. For further results on this topic, see \cite{H}.

To construct independent sets, we will start by considering a suitable subgroup $\cS$ of $\PGL(3,q)$ leaving the polarity invariant, i.e., if $h \in \cS$ and $P$ is a point of $\PG(2,q)$, then ${(P^h)}^\perp = {(P^\perp)}^h$, where $\perp$ is the polarity under consideration. Then we will consider a certain point--orbit of this subgroup $\cS$, say $\cO$, and we will prove that $\cO$ is a coclique. This method has the advantage that we only have to check that $|P^\perp \cap \cO|=0$ for a single point $P$ in the orbit $\cO$. For if $Q$ were a point in $\cO$ such that $Q^\perp \cap \cO$ contains a point $R$, then we would have $Q = P^h$ for a certain $h \in \cS$, from which follows that $R^{h^{-1}} \in P^\perp \cap \cO$. Therefore, $|P^\perp \cap \cO| = |Q^\perp \cap \cO|$.

\subsection{$q$ odd square}

Let $q$ be an even power of an odd prime. Let $\cC$ be the conic of $\PG(2,q)$ having the following equation:
$$
X_2^2 - X_1 X_3 = 0,
$$
and let $\perp$ denote the orthogonal polarity of $\PG(2,q)$ defined by $\cC$. The conic $\cC$ consists of the following set of points: $\{ (1, t, t^2) \,\, | \,\, t \in \Fq \} \cup \{ U_3 \}$. For a point $P = (x_1, x_2, x_3)$, its {\em polar line} is the line having equation $P^\perp: x_3 X_1-2 x_2 X_2 + x_1 X_3 = 0$. The points of the plane that are not on $\cC$ are either {\em external}, which means that they lie on two tangents of $\cC$, or {\em internal}, lying on no tangent of $\cC$. The polar line of a point $P$ is external, secant or tangent, according as $P$ is internal, external or on $\cC$, respectively. We will denote by $\cE$ the set of $q(q+1)/2$ external points of $\cC$ and by $\cI$ the set of $q(q-1)/2$ internal points of $\cC$.
Let $H$ be the stabilizer of $\cC$ in $\PGL(3,q)$. We shall find it helpful to work with the elements of $\PGL(3,q)$ as matrices in $\GL(3,q)$ and the points of $\PG(2,q)$ as column vectors, with matrices acting on the left. We recall the following well-known results, for which \cite{H} is the standard reference. 

\begin{lemma}\cite[Corollary 7.14]{H}
Let $H$ be the stabilizer group of the conic $\cC:X_2^2-X_1X_3=0$, then $H \cong \PGL(2,q)$ and the isomorphism is given by
$$
\left(
\begin{array}{cc}
a & b \\
c & d \\
\end{array}
\right)
\longleftrightarrow
\left(
\begin{array}{cccc}
a^2 & 2ac & c^2 \\
ab & ad+bc & cd \\
b^2 & 2bd & d^2 \\
\end{array}
\right),
$$
with $a, b, c, d \in \Fq$, $ad - bc \neq 0$.
\end{lemma}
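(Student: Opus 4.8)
The plan is to recognise the displayed correspondence as the second symmetric power of the natural representation of $\GL(2,q)$, and then to identify its image inside $H$ by a transitivity argument on the points of $\cC$. \textbf{Step 1 (the map is a homomorphism).} Let $V=\Fq^{2}$ with basis $e_1,e_2$, and let $M=\bigl(\begin{smallmatrix} a & b\\ c & d\end{smallmatrix}\bigr)\in\GL(2,q)$ act on $V$ by columns, so $e_1\mapsto ae_1+ce_2$ and $e_2\mapsto be_1+de_2$. Then $M$ acts on the three--dimensional space $\mathrm{Sym}^{2}V$ with basis $(e_1^{2},\,e_1e_2,\,e_2^{2})$, and expanding $(ae_1+ce_2)^{2}$, $(ae_1+ce_2)(be_1+de_2)$ and $(be_1+de_2)^{2}$ shows that the matrix of this action in that basis is precisely the $3\times 3$ matrix in the statement. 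Hence the correspondence is $M\mapsto\mathrm{Sym}^{2}(M)$, which is functorial and therefore a group homomorphism $\phi\colon\GL(2,q)\to\GL(3,q)$ (the image lies in $\GL(3,q)$ since $\det\mathrm{Sym}^{2}(M)=(\det M)^{3}\neq 0$). As scalar matrices go to scalar matrices, $\phi$ descends to a homomorphism $\overline{\phi}\colon\PGL(2,q)\to\PGL(3,q)$.

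\textbf{Step 2 (the image lies in $H$, and $\overline{\phi}$ is injective).} Parametrise $\cC$ by $(s:t)\mapsto(s^{2}:st:t^{2})$, whose image is $\{(1,t,t^{2})\mid t\in\Fq\}\cup\{U_3\}=\cC$. Applying the $3\times 3$ matrix to the column vector $(1,t,t^{2})$ yields $\bigl((a+ct)^{2},\,(a+ct)(b+dt),\,(b+dt)^{2}\bigr)$, again a point of $\cC$ (the one with parameter $(a+ct:b+dt)$), and likewise $U_3\mapsto(c^{2},cd,d^{2})\in\cC$. Thus $\phi(M)$ maps $\cC$ to itself, so $\overline{\phi}(\PGL(2,q))\le H$. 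For injectivity, suppose $\phi(M)$ is scalar; then its off--diagonal entries $b^{2}$ and $c^{2}$ vanish, so $b=c=0$, and the diagonal entries $a^{2},ad,d^{2}$ must coincide, which together with $a\neq 0$ gives $a=d$; hence $M$ is scalar. So $\overline{\phi}$ is an isomorphism of $\PGL(2,q)$ onto a subgroup of $H$.

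\textbf{Step 3 (surjectivity onto $H$).} Since $\PGL(2,q)$ acts sharply $3$--transitively on the $q+1$ points of $\PG(1,q)$, the subgroup $\overline{\phi}(\PGL(2,q))$ acts sharply $3$--transitively on the $q+1$ points of $\cC$. It therefore suffices to prove that an element of $H$ fixing three distinct points of $\cC$ is the identity: given $h\in H$, pick $g\in\overline{\phi}(\PGL(2,q))$ agreeing with $h$ on a chosen triple of points of $\cC$; then $g^{-1}h$ fixes that triple pointwise, so $g^{-1}h=\mathrm{id}$ and $h\in\overline{\phi}(\PGL(2,q))$. To see the displayed claim, take the points $U_1,U_3,(1,1,1)$ of $\cC$. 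Any $h\in H$ permutes the tangents of $\cC$ (which, $q$ being odd, are the polar lines at the points of $\cC$), so if $h$ fixes these three points it fixes the tangents at $U_1$ and at $U_3$, namely $X_3=0$ and $X_1=0$; these meet in $U_2$, so $h$ fixes $U_2$ as well. Thus $h$ fixes the standard frame $\{U_1,U_2,U_3,(1,1,1)\}$ of $\PG(2,q)$, forcing $h=\mathrm{id}$. Hence $H=\overline{\phi}(\PGL(2,q))\cong\PGL(2,q)$.

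\textbf{Main obstacle.} Steps 1 and 2 are essentially bookkeeping. The substantive point is Step 3: that an element of $H$ is pinned down by its action on any three points of $\cC$. The argument above reduces this to the standard fact that a projectivity fixing a frame is trivial, exploiting that in odd characteristic every point of $\cC$ carries a unique tangent line which $H$ must preserve. (If one preferred a counting proof, one would instead bound $|H|\le(q+1)q(q-1)$ via the triviality of the stabiliser in $H$ of an ordered triple on $\cC$, which is the same geometric input.)
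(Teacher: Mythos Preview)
The paper does not prove this lemma at all; it is quoted verbatim from Hirschfeld with the citation \cite[Corollary 7.14]{H} and no argument is supplied. So there is no ``paper's own proof'' to compare against, and your write-up is a genuine addition.

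Your overall architecture (symmetric-square interpretation, direct check that the conic is preserved, sharply $3$-transitive action plus a frame argument for surjectivity) is correct and is the standard route to this result. There is, however, a slip in Step~1. With the left-on-columns convention you adopt (and which the paper uses for $3\times 3$ matrices), the image of $e_1^{2}$ under $\mathrm{Sym}^{2}(M)$ is $(a^{2},\,2ac,\,c^{2})^{T}$, and this must be the first \emph{column} of the $3\times 3$ matrix. In the displayed matrix it is the first \emph{row}; the first column is $(a^{2},ab,b^{2})^{T}$. In other words, the matrix in the statement is the transpose of $\mathrm{Sym}^{2}(M)$, so the correspondence $M\mapsto N(M)$ is an \emph{anti}-homomorphism $\GL(2,q)\to\GL(3,q)$ under this convention, not a homomorphism. (A quick sanity check: with $M_1=\bigl(\begin{smallmatrix}1&1\\0&1\end{smallmatrix}\bigr)$ and $M_2=\bigl(\begin{smallmatrix}1&0\\1&1\end{smallmatrix}\bigr)$ one finds $N(M_1M_2)=N(M_2)N(M_1)\neq N(M_1)N(M_2)$.) Equivalently, your own Step~2 computation shows that $N(M)$ acts on the parameter $(s:t)$ of $\cC$ as $(s:t)\mapsto(as+ct:bs+dt)$, i.e.\ via $M^{T}$, which again signals the transpose.

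This does not damage the conclusion: an injective anti-homomorphism still has image a subgroup, and that subgroup is isomorphic to $\PGL(2,q)$ (compose with inversion). Your Steps~2 and~3 are independent of whether the map is a homomorphism or an anti-homomorphism, and they are correct as written. To repair Step~1 cleanly you can either (i) note that $N(M)=\mathrm{Sym}^{2}(M^{T})$ and that $M\mapsto M^{T}$ is an anti-automorphism, hence the image is a subgroup isomorphic to $\PGL(2,q)$; or (ii) simply replace the appeal to functoriality by a one-line remark that the displayed set of matrices is visibly closed under products and inverses, which is all you need to feed into Steps~2 and~3.
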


\begin{remark}\label{tra}
The stabilizer in $\PGL(3,q)$ of a conic of $\PG(2,q)$, $q$ odd, has three orbits on the points of $\PG(2,q)$: the points of the conic, the external points and the internal points. Hence it acts transitively on the tangent lines, the secant lines and the external lines to the conic, respectively.  
\end{remark}

In the case when $q$ is an even power of an odd prime, the current best lower bound for the independence number of $ER_q$ is due to Mubayi and Williford in \cite{MW}:
$$
\alpha(ER_q) \ge \frac{q^{3/2}+q+2}{2}.
$$
Here we give a proof for an improved lower bound:
$$
\alpha(ER_q) \ge 
\begin{cases} 
\frac{q^{3/2}-\sqrt{q}}{2} + q + 1 & \mbox{if } q \mbox{ is an odd square and } \sqrt{q} \equiv -1 \pmod 4 \hspace{.6cm}\mbox{ (Corollary \ref{cor1})}\\
\frac{q^{3/2}+3 q}{2} + 1 & \mbox{if } q \mbox{ is an odd square and } \sqrt{q} \equiv 1 \pmod 4 . \hspace{.8cm}\mbox{ (Corollary \ref{cor2})}
\end{cases}
$$
We will achieve this result by showing the existence of a set of points consisting of the $q+1$ points of $\cC$ and $(q^{3/2}-\sqrt{q})/2$ or $(q^{3/2}+q)/2$ internal points forming a coclique in $ER_q$, according as $\sqrt{q} \equiv -1 \pmod 4$ or $\sqrt{q} \equiv 1 \pmod 4$, respectively. Since the set of absolute points is independent in $ER_q$ and an absolute point can never be adjacent to an internal point, we only need to find a set of internal points such that for every point in the set, its polar line is incident with none of the other internal points of the set. We will find such a set as an orbit of some subgroup of $H$.

Before proving this bound, we first state some useful results. The following result gives an easy criterion to determine whether a point is external or internal to the conic $\cC: X_2^2-X_1X_3=0$. Denote by $\Box_{q}$ the subset of $\Fq$, $q$ odd, consisting of its square elements and define $\Box_{\sqrt{q}}$ in a similar way.

\begin{lemma}\cite[Theorem 8.3.3]{H}
A point $P = (x_1, x_2, x_3) \in \PG(2,q) \setminus \cC$ is external to $\cC$ if and only if $x_2^2-x_1x_3 \in \Box_{q} \setminus \{ 0 \}$.
\end{lemma}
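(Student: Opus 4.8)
\emph{Proof idea.} First I would translate the question through the polarity: by the characterisation recalled above, a point $P \notin \cC$ is external exactly when its polar line $P^\perp$ is secant to $\cC$ and internal exactly when $P^\perp$ is external to $\cC$; and since $P \notin \cC$, the line $P^\perp$ is not tangent, so $|P^\perp \cap \cC| \in \{0,2\}$. The whole proof then reduces to computing $|P^\perp \cap \cC|$ for $P = (x_1,x_2,x_3) \notin \cC$, which I would carry out using the parametrisation $\cC = \{(1,t,t^2) \mid t \in \Fq\} \cup \{U_3\}$ together with the explicit formula $P^\perp \colon x_3 X_1 - 2x_2 X_2 + x_1 X_3 = 0$.

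Substituting the parametrisation, $(1,t,t^2) \in P^\perp$ if and only if $x_1 t^2 - 2x_2 t + x_3 = 0$, while $U_3 \in P^\perp$ if and only if $x_1 = 0$. In the main case $x_1 \neq 0$, the point $U_3$ lies off $P^\perp$ and the displayed relation is a genuine quadratic in $t$, with discriminant $4(x_2^2 - x_1 x_3)$; this is nonzero because $P \notin \cC$, and since $q$ is odd, $4$ is a nonzero square, so the discriminant is a nonzero square precisely when $x_2^2 - x_1 x_3$ is. Reading off the number of roots, $|P^\perp \cap \cC| = 2$ — that is, $P$ is external — exactly when $x_2^2 - x_1 x_3 \in \Box_q \setminus \{0\}$, and $|P^\perp \cap \cC| = 0$ — that is, $P$ is internal — otherwise, which is the desired equivalence in this case.

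The one point needing care, and the only place I expect any friction, is the degenerate case $x_1 = 0$, where the quadratic collapses to a linear equation but $U_3$ simultaneously joins $P^\perp \cap \cC$. Here $P \notin \cC$ forces $x_2 \neq 0$, so after normalising $P = (0,1,x_3)$ one has $P^\perp \colon x_3 X_1 - 2X_2 = 0$, which passes through $U_3$ and meets $\{(1,t,t^2) \mid t \in \Fq\}$ only at $t = x_3/2$; hence $|P^\perp \cap \cC| = 2$ and $P$ is external, consistent with $x_2^2 - x_1 x_3 = 1 \in \Box_q \setminus \{0\}$. Assembling the two cases gives the lemma; apart from handling this degeneration and noting that scaling by the nonzero square $4$ does not change squareness in $\Fq$, nothing substantial is involved.
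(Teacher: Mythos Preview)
Your argument is correct: the reduction to counting $|P^\perp \cap \cC|$ via the explicit parametrisation and the discriminant computation is exactly the standard way to establish this, and you handle the degenerate case $x_1 = 0$ cleanly. Note that the paper does not actually supply its own proof of this lemma --- it is simply quoted from \cite[Theorem 8.3.3]{H} as a known background fact --- so there is nothing to compare against; your write-up is a perfectly acceptable self-contained verification.
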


\begin{lemma}\label{square} 
Let $a \in \Fq$, then $a^{\sqrt{q}+1} \in \Box_{\sqrt{q}}$ if and only if $a \in \Box_{q}$.
\end{lemma}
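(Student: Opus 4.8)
The plan is to argue via the multiplicative structure of $\Fq^*$. Recall that $q = (\sqrt{q})^2$, so $\Fsqrtq$ is the unique subfield of $\Fq$ of index $2$, and $\Fsqrtq^* = \{ a \in \Fq^* : a^{\sqrt{q}-1} = 1 \}$ is precisely the subgroup of $\Fq^*$ of order $\sqrt{q}-1$. First I would handle the degenerate case $a = 0$ separately (both sides hold trivially, reading $0 \in \Box_q$ and $0 \in \Box_{\sqrt{q}}$), so from now on assume $a \in \Fq^*$. The key observation is that the map $N : \Fq^* \to \Fsqrtq^*$ given by $N(a) = a^{\sqrt{q}+1}$ is exactly the norm map of the extension $\Fq/\Fsqrtq$: indeed $a^{(\sqrt{q}+1)} = a \cdot a^{\sqrt{q}}$, and $N(a)^{\sqrt{q}-1} = a^{q-1} = 1$ shows that the image lies in $\Fsqrtq^*$. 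Since $\Fq^*$ is cyclic of order $q-1 = (\sqrt{q}-1)(\sqrt{q}+1)$, the norm map is surjective onto $\Fsqrtq^*$ with kernel of order $\sqrt{q}+1$.

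Next I would compare the two relevant index-$2$ subgroups. Fix a generator $g$ of the cyclic group $\Fq^*$; then $\Box_q \setminus \{0\}$ is the subgroup of squares, i.e. $\langle g^2 \rangle$, of index $2$. Write $a = g^k$. Then $a \in \Box_q$ iff $k$ is even. On the other side, $N(a) = g^{k(\sqrt{q}+1)}$, and $g^{\sqrt{q}+1}$ is a generator of $\Fsqrtq^*$ (its order is $(q-1)/(\sqrt{q}+1) = \sqrt{q}-1$). Hence $N(a) = a^{\sqrt{q}+1}$ is a square in $\Fsqrtq$ iff $k(\sqrt{q}+1)$ is even iff (using that $\sqrt{q}$ is odd, as $q$ is an odd square of an odd prime power, so $\sqrt{q}+1$ is even) — wait, one must be careful here: $\sqrt{q}+1$ is even precisely when $\sqrt{q}$ is odd, which is always the case. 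So this naive parity count would make $N(a)$ always a square, which is false; the subtlety is that "being a square in $\Fsqrtq^*$" refers to the subgroup $\langle (g^{\sqrt{q}+1})^2 \rangle = \langle g^{2(\sqrt{q}+1)} \rangle$ of $\Fsqrtq^*$, and $N(a) \in \Box_{\sqrt{q}}$ iff $N(a)$ is a $2(\sqrt{q}+1)$-th power times possibly... Let me restate cleanly: $N(a) = g^{k(\sqrt{q}+1)}$ lies in $\Box_{\sqrt{q}} \setminus \{0\} = \langle g^{2(\sqrt{q}+1)}\rangle$ iff $k(\sqrt{q}+1) \equiv 0 \pmod{2(\sqrt{q}+1)}$ as exponents modulo $q-1$; dividing through, this holds iff $k$ is even modulo $2$ (after accounting for the order $\sqrt{q}-1$ of $g^{\sqrt{q}+1}$, which is even, so the relevant condition is genuinely $k \equiv 0 \pmod 2$). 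Thus $N(a) \in \Box_{\sqrt{q}}$ iff $k$ is even iff $a \in \Box_q$, as desired.

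The cleanest way to present the core step, avoiding exponent bookkeeping, is the following: $\Box_q \setminus \{0\}$ is the unique subgroup of $\Fq^*$ of index $2$, hence it contains $\ker N$ (which has order $\sqrt{q}+1$; one checks $\sqrt{q}+1$ is even and divides $(q-1)/2$, so $\ker N \subseteq \Box_q$); therefore $N$ descends to a well-defined surjective homomorphism $\Fq^*/(\Box_q\setminus\{0\}) \to \Fsqrtq^*/(\Box_{\sqrt{q}}\setminus\{0\})$ between groups of order $2$, which must be an isomorphism. The statement $a^{\sqrt{q}+1} \in \Box_{\sqrt{q}} \iff a \in \Box_q$ is then exactly the assertion that this isomorphism is compatible with the natural identifications, i.e. that $N$ maps non-squares to non-squares, which follows since an isomorphism of groups of order $2$ cannot map a non-identity element to the identity. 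The main obstacle, as the false start above illustrates, is resolving the apparent parity paradox — one has to keep straight that "$\Box_{\sqrt{q}}$" means squares within $\Fsqrtq^*$ and not merely elements whose discrete log in $\Fq^*$ is even — and the inclusion $\ker N \subseteq \Box_q$, which relies on $\sqrt{q}$ being odd, is the one genuinely number-theoretic input that makes the argument go through.
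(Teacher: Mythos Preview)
The paper states Lemma~\ref{square} without proof, so there is no argument to compare against; your task is just to give a correct proof, and you have done so.  Both versions you offer are valid.  The generator argument is the most transparent once stripped of the false start: with $g$ a generator of $\Fq^*$ and $h = g^{\sqrt{q}+1}$ a generator of $\Fsqrtq^*$, writing $a = g^k$ gives $a^{\sqrt{q}+1} = h^k$; since both $q-1$ and $\sqrt{q}-1$ are even, ``$k$ is even'' is well defined and is simultaneously the condition for $a \in \Box_q$ and for $h^k \in \Box_{\sqrt{q}}$.  Your ``cleanest'' argument via quotient groups is also correct, but note that the inclusion $\ker N \subseteq \Box_q \setminus \{0\}$ is doing real work there: it guarantees that $N(\Box_q \setminus\{0\})$ is the unique index-$2$ subgroup of $\Fsqrtq^*$, hence equals $\Box_{\sqrt{q}}\setminus\{0\}$, and that $N^{-1}(\Box_{\sqrt{q}}\setminus\{0\}) = \Box_q\setminus\{0\}$.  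The sentence ``therefore $N$ descends to a well-defined surjective homomorphism'' slightly obscures this, since well-definedness of the descended map comes from $N(b^2) = N(b)^2$, not from the kernel inclusion; the kernel inclusion is what upgrades surjectivity to injectivity.  For a final write-up I would keep just the three-line generator computation and drop the exploratory paragraph.
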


\subsubsection{$\sqrt{q} \equiv -1 \pmod 4$}

Denote by $B \coloneqq \PG(2,\sqrt{q})$ the standard Baer subplane of $\PG(2,q)$ and let $c = \cC \cap B$ be the restriction of the conic $\cC$ to the Baer subplane $B$.

\begin{remark} 
Since every element of $\Fsqrtq$ is a square in $\Fq$, we have that every point in $B \setminus c$ is external to $\cC$.
\end{remark}

Let $G \coloneqq Stab_H(c)$. Then, it is known that $G \cong \PGL(2,\sqrt{q})$ is maximal in $H$, see \cite{Di, M}. It is easily seen that the group $G$ consists of the matrices
\[
\left(
\begin{array}{cccc}
a^2 & 2ac & c^2 \\
ab & ad+bc & cd \\
b^2 & 2bd & d^2 \\
\end{array}
\right) ,
\]
where $a,b,c,d \in \Fsqrtq, ad-bc \neq 0$.

We need to determine the orbits of the group $G$ on the points of $\PG(2,q) \setminus B$.
\begin{prop}\label{orb}
The group $G$ has the following orbits on the points of $\PG(2,q) \setminus B$:
\begin{itemize}
\item one orbit of size $q-\sqrt{q}$, consisting of the points of $\cC \setminus c$,
\item one orbit of size $q^{3/2}-\sqrt{q}$, consisting of the points of $\cE \setminus B$ on the tangent lines to $c$,
\item $\sqrt{q}-2$ orbits of size $(q^{3/2}-\sqrt{q})/2$, consisting of points of $\cE$,
\item $\sqrt{q}$ orbits of size $(q^{3/2}-\sqrt{q})/2$, consisting of points of $\cI$.
\end{itemize}
\end{prop}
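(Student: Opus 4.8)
The plan is to analyze the action of $G\cong\PGL(2,\sqrt q)$ on $\PG(2,q)\setminus B$ by exploiting the fact that $G$ already has a well-understood action on the Baer conic $c$ and its secant/tangent/external lines inside $B$, and that $c$ is a conic in the larger plane too, whose stabilizer in $\PGL(3,q)$ is $H$. First I would count: $|\PG(2,q)\setminus B| = q^2+q+1-(q+\sqrt q+1) = q^2-\sqrt q$, and $|G| = |\PGL(2,\sqrt q)| = \sqrt q(q-1)(\sqrt q+1) = q^{3/2}+q-\sqrt q - \sqrt{q^{3/2}}$; wait, more cleanly $|G|=(\sqrt q+1)\sqrt q(\sqrt q-1)=q^{3/2}-\sqrt q$. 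So the orbit sizes claimed are $q-\sqrt q$ (a factor $\sqrt q$ stabilizer), $q^{3/2}-\sqrt q$ (regular), and $(q^{3/2}-\sqrt q)/2$ (stabilizer of order $2$). Summing the claimed orbits: $(q-\sqrt q) + (q^{3/2}-\sqrt q) + (2\sqrt q - 2)\cdot\frac{q^{3/2}-\sqrt q}{2} = (q-\sqrt q) + (q^{3/2}-\sqrt q) + (\sqrt q-1)(q^{3/2}-\sqrt q)$. Expanding, this equals $q - \sqrt q + q^{3/2}-\sqrt q + q^2 - \sqrt{q^{3/2}}\cdot\ldots$ — I would verify this telescopes to exactly $q^2-\sqrt q$, which provides a strong consistency check and, once the stabilizer orders below are pinned down, actually forces the orbit count.

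The key geometric input is that $G$ acts on the points of $\PG(2,q)\setminus B$ according to how they sit relative to the Baer subplane. A point $P\notin B$ lies on a unique line $\ell_P$ of $B$ (the line $\langle P, P^\sigma\rangle$ meets $B$ in a Baer subline, but the relevant statement is: each point off $B$ lies on exactly one extended line of $B$), and $G$ permutes the extended $B$-lines exactly as $\PGL(2,\sqrt q)$ permutes the lines of $B$, i.e. with three orbits: $\frac{\sqrt q(\sqrt q+1)}{2}$ secants to $c$, $\sqrt q+1$ tangents to $c$, and $\frac{\sqrt q(\sqrt q-1)}{2}$ external lines to $c$ (using Remark \ref{tra} applied inside $B$). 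On a tangent line $t$ of $B$ to $c$, the stabilizer in $G$ of $t$ acts on the $q-\sqrt q$ points of $t\setminus B$; since the stabilizer of a tangent line in $\PGL(2,\sqrt q)$ has order $\sqrt q(\sqrt q - 1)$ and this group (a point stabilizer, essentially $\mathrm{AGL}(1,\sqrt q)$) acts on $t\setminus B\cong \mathbb{F}_q\setminus\mathbb{F}_{\sqrt q}$ — here the unique point of $t$ on $c$ plays the role of $\infty$ — I expect two orbits: the $q-\sqrt q$ points lying on $\cC$ (one orbit, these are the points of $\cC\setminus c$, accounting for the first bullet) and... no: the points of $\cC\setminus c$ need not lie on tangents to $c$. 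So instead I would separate the analysis: the $q-\sqrt q$ points of $\cC\setminus c$ form a single $G$-orbit because $G\le H$ acts on $\cC$ as $\PGL(2,\sqrt q)\le\PGL(2,q)$ acts on $\PG(1,q)$, which is transitive on $\PG(1,q)\setminus\PG(1,\sqrt q)$; the stabilizer has order $\sqrt q$, confirming size $q-\sqrt q$. For points off $\cC\cup B$, Lemma \ref{square} and the external/internal criterion (Lemma [Theorem 8.3.3]) let me split them into $\cE$ versus $\cI$; and within each, the position relative to the extended $B$-lines, together with the tangent-line count, gives the stated orbit decomposition.

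The main obstacle I anticipate is proving that the stabilizer of a generic point $P\in(\cE\cup\cI)\setminus B$ in $G$ has order exactly $2$ (rather than $1$), which is what produces the orbit sizes $(q^{3/2}-\sqrt q)/2$ and the orbit \emph{count} $2\sqrt q -2$ split as $\sqrt q - 2$ in $\cE$ and $\sqrt q$ in $\cI$. The natural candidate for the order-$2$ stabilizer is an involution in $G$: for a point on a secant line $s$ to $c$, the harmonic homology of $B$ with axis $s$ and center the pole of $s$ extends to an element of $H$ fixing $s$ pointwise... no — rather, the two points of $c$ on $s$ and the point $P$ lie on $s$, and there is an involution in the stabilizer of $s$ swapping nothing but fixing $P$; concretely $\PGL(2,\sqrt q)$ acting on a secant has dihedral line-stabilizer of order $2(\sqrt q - 1)$, and on the $q+1-2 = q-1$ non-$c$ points of the extended secant (minus the two in $B$: $q-1-(\sqrt q - 1) = q - \sqrt q$ points off $B$) the pointwise analysis shows generic orbits of length $(\sqrt q - 1)$ with a size-$2$ stabilizer coming from the unique involution fixing that point. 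Pinning this down — i.e. checking no point has a larger cyclic or dihedral stabilizer except the special families already isolated (points on tangents to $c$, points of $\cC\setminus c$) — is the delicate counting step, but it is forced once the orbit-size sum is shown to equal $q^2 - \sqrt q$: any undercounting of stabilizer orders would overshoot. So I would close the argument by the counting identity rather than by a case-by-case stabilizer computation, after establishing the three "exceptional" orbits directly and showing every remaining point lies on a secant to $c$ (hence has even stabilizer via the involution).
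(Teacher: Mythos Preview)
Your overall strategy matches the paper's: exploit that every point of $\PG(2,q)\setminus B$ lies on a unique extended Baer line, use that $G$ has three orbits on these lines (secant, tangent, external to $c$), and then analyze the action of each line stabilizer on the $q-\sqrt q$ non-Baer points of that line. But your execution has a genuine gap.

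You only seriously treat the secant case (dihedral stabilizer of order $2(\sqrt q-1)$), and your closing sentence claims that ``every remaining point lies on a secant to $c$''. This is false. Points on Baer lines that are \emph{external} to $c$ account for a large chunk of $\PG(2,q)\setminus B$, and in fact the points of $\cC\setminus c$ all lie on such external lines: if $P=(1,t,t^2)$ with $t\notin\Fsqrtq$, the unique Baer line through $P$ is the line joining $P$ to $P^\sigma=(1,t^{\sqrt q},t^{2\sqrt q})$, which meets $\cC$ in two non-Baer points and hence meets $c$ in no point at all. The paper handles this case explicitly: the stabilizer in $G$ of an external Baer line is dihedral of order $2(\sqrt q+1)$, the two points of $\cC\setminus c$ on it are swapped (giving the $q-\sqrt q$ orbit), and the remaining $q-\sqrt q-2$ non-Baer points split into orbits of length $\sqrt q+1$, yielding $(\sqrt q-1)/2$ orbits in $\cI$ and $(\sqrt q-3)/2$ in $\cE$. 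Without this analysis you cannot recover the announced orbit counts: the secant lines alone give only $(\sqrt q+1)/2$ orbits in $\cI$ and $(\sqrt q-1)/2$ in $\cE$, which is half of what you need.

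Your proposed ``counting identity closes the argument'' shortcut also does not work as stated. Even granting that every leftover point has stabilizer of order exactly $2$ (which you have not shown on external lines, where the involution is of a different type than on secants), the total count only tells you there are $2\sqrt q-2$ orbits of size $(q^{3/2}-\sqrt q)/2$; it does not by itself give the split $\sqrt q-2$ in $\cE$ versus $\sqrt q$ in $\cI$. That split comes from counting, on a representative secant line and on a representative external line separately, how many of the non-Baer points are internal versus external to $\cC$ --- exactly what the paper does and what your sketch omits.
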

\begin{proof}
Each point of $\PG(2,q) \setminus B$ lies on exactly one line having $\sqrt{q}+1$ points in common with $B$. If $P$ is a point of $\PG(2,q) \setminus B$ and $\ell_P$ is the line containing $P$ and having $\sqrt{q}+1$ points in common with $B$, then the stabilizer in $G$ of $P$ fixes $\ell_{P}$. Indeed, $\ell_P$ can be restricted to a line of $B$, so under $G$, $\ell_P$ has to map to a line with $\sqrt{q}+1$ points of $B$ again. As there is only one such line through $P$, it follows that it has to be fixed whenever $P$ is fixed. 

The point $U_2 \in B$ is external with respect to the Baer conic $c$. The stabilizer in $G$ of the line $U_2^\perp = \ell_1 = U_1 U_3$ is the dihedral group $D_1$ of order $2(\sqrt{q}-1)$ generated by

$$
\left(
\begin{array}{cccc}
1 & 0 & 0 \\
0 & d & 0 \\
0 & 0 & d^2 \\
\end{array}
\right) ,
\left(
\begin{array}{cccc}
0 & 0 & 1 \\
0 & 1 & 0 \\
1 & 0 & 0 \\
\end{array}
\right) ,
$$
with $d \in \Fsqrtq \setminus \{ 0 \}$.
An easy calculation shows that the stabilizer in $D_1$ of a point $P \in \ell_1$ is a group of order $4$, if $P \in B \setminus \cC$, or an involution if $P \notin B$. It follows that a $D_1$-orbit of a point of $(\ell_1 \cap B) \setminus \cC$ has size $(\sqrt{q}-1)/2$, whereas a $D_1$-orbit of a point of $\ell_1 \setminus B$ has size $\sqrt{q}-1$. On the other hand, $\ell_1 \setminus B$ contains $(q-1)/2$ points of $\cI$ and $(\sqrt{q}-1)^2/2$ points of $\cE$. Taking into account Remark \ref{tra}, we have that, under the action of the group $G$, there are $(\sqrt{q}+1)/2$ orbits of size $(q^{3/2}-\sqrt{q})/2$ consisting of points of $\cI$ and $(\sqrt{q}-1)/2$ orbits of size $(q^{3/2}-\sqrt{q})/2$ consisting of points of $\cE$ arising in this way, lying on a secant of $B$ to $c$.

Let $s$ be a non--square in $\Fsqrtq$. Let $Q$ be the point having coordinates $(-s,0,1)$. The point $Q \in B$ is internal with respect to the Baer conic $c$. The stabilizer in $G$ of the line $Q^\perp = \ell_2: X_1 = s X_3$ is the dihedral group $D_2$ of order $2(\sqrt{q}+1)$ generated by

$$
\left(
\begin{array}{cccc}
a^2 & 2sab & s^2b^2 \\
ab & a^2+sb^2 & sab \\
b^2 & 2ab & a^2 \\
\end{array}
\right) ,
\left(
\begin{array}{cccc}
1 & 0 & 0 \\
0 & -1 & 0 \\
0 & 0 & 1 \\
\end{array}
\right) ,
$$
with $a,b \in \Fsqrtq$ such that $a^2-sb^2 = \pm 1$.
An easy calculation shows that the stabilizer in $D_2$ of a point $P \in \ell_2$ is a group of order $4$, if $P \in B \setminus \cC$, or an involution if $P \notin B$. It follows that a $D_2$-orbit of a point of $(\ell_2 \cap B) \setminus \cC$ has size $(\sqrt{q}+1)/2$, whereas a $D_2$-orbit of a point of $\ell_2 \setminus B$ has size $\sqrt{q}+1$. On the other hand, $\ell_2 \setminus B$ contains $(q-1)/2$ points of $\cI$, $(\sqrt{q}+1)(\sqrt{q}-3)/2$ points of $\cE$ and $2$ points of $\cC \setminus c$. Taking into account Remark \ref{tra}, we have that, under the action of the group $G$, there are $(\sqrt{q}-1)/2$ orbits of size $(q^{3/2}-\sqrt{q})/2$ consisting of points of $\cI$ and $(\sqrt{q}-3)/2$ orbits of size $(q^{3/2}-\sqrt{q})/2$ consisting of points of $\cE$. Note that, since the two points of $\ell_2 \cap (\cC \setminus c)$ are interchanged by the group $D_2$, we have that, under the action of $G$, the points of the conic $\cC$ are partitioned into two orbits: the $\sqrt{q}+1$ points in $B$ and the remaining $q-\sqrt{q}$ points of $\cC \setminus c$.

In the same fashion, it is possible to prove that if $R$ is a point of $c$, then the stabilizer in $G$ of the line $R^\perp = \ell_3$ is a group $D_3$ of order $q-\sqrt{q}$. The line $\ell_3$ has $\sqrt{q}+1$ points in common with $B$ and it is tangent to $c$ at the point $R$. The stabilizer in $D_3$ of a point $P \in \ell_3$ is a group of order $\sqrt{q}-1$, if $P \in B \setminus \cC$, or is the identity if $P \notin B$. It follows that a $D_3$-orbit of a point of $(\ell_3 \cap B) \setminus \cC$ has size $\sqrt{q}$, whereas a $D_3$-orbit of a point of $\ell_3 \setminus B$ has size $q-\sqrt{q}$. On the other hand, $\ell_3 \setminus B$ contains $q-\sqrt{q}$ points of $\cE$. Taking into account Remark \ref{tra}, we have that, under the action of the group $G$, there is one orbit of size $q^{3/2}-\sqrt{q}$ consisting of points of $\cE \setminus B$ lying on a tangent line to $c$.
\end{proof}

Now we are ready to give a proof of the announced result. 

\begin{theorem}\label{goodorbit}
If $\sqrt{q} \equiv -1 \pmod 4$, then there are $(\sqrt{q}+1)/2$ $G$--orbits on internal points such that for every point $P$ in the orbit, $|P^G \cap P^\perp| = 0$.

\end{theorem}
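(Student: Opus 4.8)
The plan is to exhibit the $(\sqrt q+1)/2$ orbits explicitly and to verify the coclique property for a single representative of each, which is enough by the reduction recalled above: if $gP\notin P^\perp$ for all $g\in G$ then, since $G$ preserves the polarity, $|Q^G\cap Q^\perp|=0$ for every $Q\in P^G$. By Proposition~\ref{orb}, $G$ has $(\sqrt q+1)/2$ orbits on internal points that lie on a secant of $B$ to $c$; as in its proof, each such orbit contains a point of $\ell_1=U_1U_3$, that is, a point $P_u=(1,0,u)$, and by the criterion recalled above (here $x_2^2-x_1x_3=-u$) this point is internal exactly when $u\neq0$ and $-u\notin\Box_q$. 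Thus everything reduces to the claim: if $u\in\Fq$, $u\neq0$ and $-u\notin\Box_q$, then $gP_u\notin P_u^\perp$ for every $g\in G$.

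To prove the claim I would take a general $g\in G$ with parameters $a,b,c,d\in\Fsqrtq$ from the matrix description of $G$ above, so that
$$
gP_u=(a^2+c^2u,\ ab+cdu,\ b^2+d^2u),
$$
while $P_u^\perp$ is the line $uX_1+X_3=0$; hence $gP_u\in P_u^\perp$ is equivalent to $ua^2+u^2c^2+b^2+ud^2=0$. It then suffices to show that over $\Fsqrtq$ this equation forces $a=b=c=d=0$, which is impossible since $g$ is invertible. The key preliminary observation is that $u\notin\Fsqrtq$: otherwise $u$ would be a square in $\Fq$, and so would $-1$ (every element of $\Fsqrtq$ is a square in $\Fq$), giving $-u\in\Box_q$, a contradiction. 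Hence $\{1,u\}$ is an $\Fsqrtq$-basis of $\Fq$ and $u^2=tu-n$ with $t=u+u^{\sqrt q}\in\Fsqrtq$ and $n=u^{\sqrt q+1}\in\Fsqrtq$; substituting this and comparing the $1$- and $u$-components turns the equation into the system $a^2+tc^2+d^2=0$, $b^2=nc^2$.

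The argument is then finished by a two-case split, which is the only place where the hypotheses on $q$ are used. If $c=0$, then $b=0$ and $a^2=-d^2$; since $\sqrt q\equiv-1\pmod 4$, $-1$ is a nonsquare in $\Fsqrtq$, so $a=d=0$ and hence $a=b=c=d=0$. If $c\neq0$, then $n=(b/c)^2$ is a square in $\Fsqrtq$, and it is nonzero because $n=u^{\sqrt q+1}\neq0$; but $-u\notin\Box_q$ and $-1\in\Box_q$ give $u\notin\Box_q$, whence $n=u^{\sqrt q+1}\notin\Box_{\sqrt q}$ by Lemma~\ref{square}, a contradiction. In either case $gP_u\notin P_u^\perp$, so each of the $(\sqrt q+1)/2$ orbits of Proposition~\ref{orb} consisting of internal points on secants of $c$ is a coclique, which proves the theorem.

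The matrix multiplication and the substitution are routine; the only real content is the case split, where I foresee no serious difficulty. What needs care is the choice of orbits and the role of the congruence: the hypothesis $\sqrt q\equiv-1\pmod 4$ is essential, because when $\sqrt q\equiv1\pmod 4$ one may take $(a,b,c,d)=(1,0,0,i)$ with $i^2=-1$, obtaining $gP_u=(1,0,-u)\in P_u^G\cap P_u^\perp$, so those orbits are then not cocliques. This is exactly why the case $\sqrt q\equiv1\pmod 4$ has to be treated with a different orbit (Corollary~\ref{cor2}).
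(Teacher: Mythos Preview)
Your proof is correct and follows essentially the same route as the paper's: the same representative $P=(1,0,w)$ on $\ell_1$, the same equation $c^2w^2+(a^2+d^2)w+b^2=0$, and the same two-case split on $c$, invoking Lemma~\ref{square} in the $c\neq0$ case and the fact that $-1\notin\Box_{\sqrt q}$ in the $c=0$ case. The only cosmetic difference is that you expand $w^2=tw-n$ and compare $\Fsqrtq$-coordinates, whereas the paper observes that $w^{\sqrt q}$ is the other root of the quadratic and reads off $w^{\sqrt q+1}=b^2/c^2$ via Vieta; these are two phrasings of the same computation.
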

\begin{proof}


Let $\ell$ be the line having equation $X_2 = 0$. Let $w \notin \Box_{q}$ and consider the internal point $P = (1, 0, w) \in \ell$. The polar line of $P$ has equation $P^\perp: w X_1 + X_3 = 0$.
\noindent
The line $\ell$ is secant to $c$ and from the proof of Proposition \ref{orb}, there are $(\sqrt{q}+1)/2$ $G$--orbits consisting of points of $\cI$, each having $\sqrt{q}-1$ points of $\ell$. We want to prove that each of these orbits has the required property. 

From Proposition \ref{orb}, the orbit of $P$ under the action of $G$ has size $(q^{3/2}-\sqrt{q})/2$. In particular $P^G = \{ (a^2+c^2 w, ab+cd w, b^2+d^2 w) \;\; | \;\; a, b, c, d \in \Fsqrtq, ad-bc \neq 0 \}$. Assume, by way of contradiction, that $|P^G \cap P^\perp| \neq 0$. Then there would exist $a,b,c,d \in \Fsqrtq$, with $ad-bc \neq 0$, such that
$$
c^2 w^2 + (a^2 + d^2) w + b^2 = 0.
$$
Now we have to distinguish whether $c$ is zero or not.
If $c \neq 0$, then $w^{\sqrt{q}}$ is also a root of the equation $c^2 X^2+ (a^2+d^2) X + b^2 = 0$ and $w^{\sqrt{q}+1} = b^2/ c^2 \in \Box_{\sqrt{q}}$. By Lemma \ref{square}, it follows that $w \in \Box_{q}$, a contradiction.
If $c = 0$, then, since $a^2+d^2 \neq 0$, we would have $w = -b^2 / (a^2+d^2) \in \Fsqrtq$, a contradiction.

\end{proof}

\begin{cor}\label{cor1}
If $q$ is an even power of an odd prime and $\sqrt{q} \equiv -1 \pmod 4$, then 
$$
\alpha(ER_q) \ge \frac{q^{3/2}-\sqrt{q}}{2}+q+1.
$$
\end{cor}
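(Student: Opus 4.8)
The plan is to assemble an explicit independent set in $ER_q$ by combining three ingredients: the $q+1$ absolute points (the points of the conic $\cC$), the internal points, and the structural results just proved. Since the set of absolute points is always independent and an absolute point is never adjacent to an internal point, it suffices to exhibit a set $\cO$ of internal points that is itself independent in $ER_q$, i.e.\ such that $P^\perp$ misses all other points of $\cO$ for each $P \in \cO$. Adding the $q+1$ conic points to such an $\cO$ then yields $\alpha(ER_q) \ge |\cO| + q + 1$.

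The key step is to take $\cO$ to be the union of the $(\sqrt{q}+1)/2$ $G$-orbits on internal points provided by Theorem \ref{goodorbit}. By Proposition \ref{orb}, each such orbit has size $(q^{3/2}-\sqrt{q})/2$, and these orbits are precisely the ones meeting the secant line $\ell : X_2 = 0$ in $\sqrt{q}-1$ points each; since $\ell$ carries exactly $(q-1)/2 = ((\sqrt{q}+1)/2)(\sqrt{q}-1)$ internal points, these orbits are pairwise disjoint and their union has size $\frac{\sqrt{q}+1}{2}\cdot\frac{q^{3/2}-\sqrt{q}}{2} = \frac{q^{3/2}-\sqrt{q}}{2}\cdot\frac{\sqrt{q}+1}{2}$. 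Wait — I must be careful: this product is larger than the claimed $(q^{3/2}-\sqrt{q})/2$. So in fact $\cO$ should be taken to be a \emph{single} one of these orbits, of size $(q^{3/2}-\sqrt{q})/2$; Theorem \ref{goodorbit} tells us that for any point $P$ in such an orbit, $|P^G \cap P^\perp| = 0$, and by the orbit-transitivity argument spelled out in the paragraph preceding Section 2.1 (checking $|P^\perp \cap \cO| = 0$ for one $P$ suffices because $\cS = G$ leaves $\perp$ invariant), this single orbit $\cO$ is an independent set of internal points.

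So the proof reduces to two verifications. First, that $G$ stabilizes the polarity $\perp$: this holds because $G \le H = \mathrm{Stab}_{\PGL(3,q)}(\cC)$ and the orthogonal polarity is defined by $\cC$, so every element of $H$ commutes with $\perp$. Second, that the three pieces — the $q+1$ points of $\cC$, and the $(q^{3/2}-\sqrt{q})/2$ internal points of $\cO$ — together form a coclique: conic points are mutually non-conjugate (the absolute points form an independent set, each $P \in \cC$ has $P^\perp$ tangent to $\cC$ at $P$ so meeting $\cC$ only in $P$), internal points of $\cO$ are mutually non-conjugate by Theorem \ref{goodorbit} and the transitivity reduction, and a conic point $A$ and an internal point $P$ are never conjugate since $A \in P^\perp$ would force the external/tangent line $P^\perp$ to contain an absolute point, contradicting that $P^\perp$ is an external line (as $P$ is internal). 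Summing, $\alpha(ER_q) \ge (q+1) + \frac{q^{3/2}-\sqrt{q}}{2}$, which is the claim.

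The only mild obstacle is bookkeeping: making sure the orbit of $P = (1,0,w)$ is genuinely one of the internal orbits counted in Proposition \ref{orb} and has the stated size $(q^{3/2}-\sqrt{q})/2$ (rather than, say, a shorter orbit), and confirming $w \notin \Box_q$ indeed makes $P$ internal via the criterion $x_2^2 - x_1x_3 = -w \notin \Box_q$. Both are immediate from the cited lemmas and the proof of Proposition \ref{orb}. No new computation is needed beyond what Theorem \ref{goodorbit} already supplies.
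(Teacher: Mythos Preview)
Your argument is correct and follows essentially the same route as the paper: take a single $G$-orbit of internal points of size $(q^{3/2}-\sqrt{q})/2$ (which is independent by Theorem~\ref{goodorbit} together with the orbit-transitivity reduction) and adjoin the $q+1$ absolute points, noting that the polar line of an internal point is external to $\cC$ so no absolute--internal edges can occur. The mid-proof digression where you consider taking the union of all $(\sqrt{q}+1)/2$ orbits and then retract should be excised from a final write-up, but the mathematics that survives matches the paper's proof.
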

\begin{proof}
	We repeat the construction as discussed before: take the union of the $q+1$ absolute points and an orbit of internal points as described in Theorem \ref{goodorbit}. If $P$ is an absolute point, then $P^\perp$ contains only external points and $P$ itself. Therefore, an absolute point is never adjacent to an internal or another absolute point. On the other hand, an internal point of this set is not adjacent to any other internal point, as shown in Theorem \ref{goodorbit}. Therefore, the set under consideration is indeed an independent set in $ER_q$.
\end{proof}

\subsubsection{$\sqrt{q} \equiv 1 \pmod 4$}


Let $K$ be the subgroup of $H$ consisting of the matrices
$$
\left(
\begin{array}{cccc}
a^2 & 2ac & c^2 \\
0 & a & c \\
0 & 0 & 1 \\
\end{array}
\right) ,
$$
with $a,c \in \Fq$ and $a^{\sqrt{q}+1}=1$. Then $| K | = q (\sqrt{q}+1)$.
We need the following technical result.
\begin{lemma}\label{tec}
Let $\sqrt{q} \equiv 1 \pmod 4$. If $a^{\sqrt{q}+1} = 1$, then $a^2+1 \in \Box_{q}$.
\end{lemma}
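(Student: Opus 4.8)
The plan is a short field computation once the hypothesis is translated appropriately. Writing the Frobenius of $\Fq$ over $\Fsqrtq$ as $x \mapsto x^{\sqrt q}$, the condition $a^{\sqrt q + 1} = 1$ says exactly that $a^{\sqrt q} = a^{-1}$ (and in particular $a \neq 0$). Applying Frobenius to $a^2+1$ then gives
$$
(a^2+1)^{\sqrt q} = (a^{\sqrt q})^2 + 1 = a^{-2} + 1 = \frac{a^2+1}{a^2}.
$$

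Next I would dispose of the degenerate case: if $a^2 + 1 = 0$ then $a^4 = 1$, so the order of $a$ divides $\gcd(4, \sqrt q + 1)$, and this gcd equals $2$ precisely because $\sqrt q \equiv 1 \pmod 4$; hence $a^2 = 1 = -1$, impossible in odd characteristic. So $a^2 + 1 \neq 0$, and dividing the displayed identity by $a^2 + 1$ yields $(a^2+1)^{\sqrt q - 1} = a^{-2}$. Raising this to the integer power $(\sqrt q + 1)/2$ and using $\tfrac{q-1}{2} = (\sqrt q - 1)\cdot\tfrac{\sqrt q + 1}{2}$,
$$
(a^2+1)^{(q-1)/2} = \big( (a^2+1)^{\sqrt q - 1} \big)^{(\sqrt q + 1)/2} = a^{-(\sqrt q + 1)} = 1,
$$
so by Euler's criterion $a^2+1$ is a nonzero square, that is, $a^2 + 1 \in \Box_q$.

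There is no real obstacle here; the only step needing a word of justification is the division by $a^2+1$, which is the sole place the hypothesis $\sqrt q \equiv 1 \pmod 4$ is used. As an alternative that bypasses that point, the computation above shows that the norm $N_{\Fq/\Fsqrtq}(a^2+1) = (a^2+1)^{\sqrt q + 1} = \big(\tfrac{a^2+1}{a}\big)^2$ with $\tfrac{a^2+1}{a} \in \Fsqrtq$ (it is fixed by Frobenius), so $(a^2+1)^{\sqrt q + 1} \in \Box_{\sqrt q}$; then Lemma \ref{square}, applied with $a^2+1$ in the role of $a$, gives $a^2+1 \in \Box_q$ immediately.
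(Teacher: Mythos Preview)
Your proposal is correct. Your \emph{alternative} route is essentially the paper's own argument: the paper writes $a=a_1+a_2 i$ with $a_1,a_2\in\Fsqrtq$, observes the factorisation $a^2+1=2a_1\cdot a$, and hence $(a^2+1)^{\sqrt q+1}=4a_1^2\in\Box_{\sqrt q}$, finishing with Lemma~\ref{square}. Your computation $(a^2+1)^{\sqrt q+1}=\bigl((a^2+1)/a\bigr)^2$ with $(a^2+1)/a=a+a^{\sqrt q}\in\Fsqrtq$ is the same identity written basis-free (indeed $a+a^{\sqrt q}=2a_1$).

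Your \emph{main} argument, going straight to Euler's criterion via $(a^2+1)^{\sqrt q-1}=a^{-2}$ and raising to $(\sqrt q+1)/2$, is a genuinely different and slightly more self-contained path: it avoids the appeal to Lemma~\ref{square} entirely. The trade-off is that it forces you to rule out $a^2+1=0$, which is precisely where the hypothesis $\sqrt q\equiv 1\pmod 4$ enters. As you correctly observe, the norm-based alternative shows the conclusion holds without that hypothesis; it is worth noting that the paper's own proof likewise never invokes $\sqrt q\equiv 1\pmod 4$, so the hypothesis in the lemma statement is there only for the context of the subsequent theorem, not because the lemma requires it.
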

\begin{proof}
Let $i \in \Fq$ be the root of a non--square $s$ in $\Fsqrtq$, hence $i^{\sqrt{q}} = -i$ and $i^2 = s$. Since $\Fq = \Fsqrtq[ i ]$, we have that $a = a_1+a_2 i$, for some $a_1, a_2 \in \Fsqrtq$. Hence, $a^{\sqrt{q}} = a_1 - a_2 i$ and $a^{\sqrt{q}+1} = a_1^2 - s a_2^2 = 1$. It follows that $a^2 + 1 = 2 a_1 (a_1 + a_2 i)$. Since $(a^2+1)^{\sqrt{q}+1} = 4 a_1^2 \in \Box_{\sqrt{q}}$, from Lemma \ref{square}, we have that $a^2+1 \in \Box_{q}$.
\end{proof}

\begin{theorem}
Let $\cO$ be a $K$--orbit on internal points, then for every point $P \in \cO$, we have that $|\cO \cap P^\perp| = 0$.
\end{theorem}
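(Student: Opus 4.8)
The plan is to run the orbit method from the beginning of Section~2. Since $K\le H$ stabilises $\cC$, it leaves the polarity $\perp$ invariant, so it is enough to find one point $P\in\cO$ with $P^\perp\cap\cO=\emptyset$; the claim for every point of $\cO$ then follows from the argument recalled there. I would first note that every internal point has nonzero last coordinate — a point $(x_1,x_2,0)$ has $x_2^2-x_1\cdot 0=x_2^2\in\Box_q$, so it is on $\cC$ or external — and take a representative $P=(x_1,x_2,1)\in\cO$. Set $m:=x_1-x_2^2$. Being internal, $P$ satisfies $-m=x_2^2-x_1\notin\Box_q$; and $-1\in\Box_q$ because $q$ is an odd square, so $m\notin\Box_q$, in particular $m\neq 0$.

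Next I would compute, for a general $h\in K$ (notation as in the matrix defining $K$), $P^h=(a^2x_1+2acx_2+c^2,\;ax_2+c,\;1)$, and impose that it lie on $P^\perp\colon X_1-2x_2X_2+x_1X_3=0$. Substituting $x_1=x_2^2+m$ and completing the square, the incidence condition reduces to
$$\big(x_2(a-1)+c\big)^2+m(a^2+1)=0 .$$
Thus a point of $\cO$ on $P^\perp$ would make $-m(a^2+1)$ a square in $\Fq$, and the goal is to show this is impossible under $a^{\sqrt q+1}=1$ and $\sqrt q\equiv 1\pmod{4}$.

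For that I would use two facts. First, $a^2+1\neq 0$: if $a^2=-1$ then $a$ has multiplicative order $4$, so $4\mid\sqrt q+1$, contradicting $\sqrt q+1\equiv 2\pmod{4}$. Second, Lemma~\ref{tec} gives $a^2+1\in\Box_q$, hence $a^2+1\in\Box_q\setminus\{0\}$. Combined with $-m\notin\Box_q$ (which holds since $m\notin\Box_q$ and $-1\in\Box_q$), the product $(-m)(a^2+1)$ is a nonzero non-square, contradicting the displayed identity. This completes the argument.

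The step I expect to be the real obstacle — and the only one that genuinely uses the parity hypothesis — is the observation that $a^{\sqrt q+1}=1$ forbids $a^2=-1$. If instead $\sqrt q\equiv -1\pmod{4}$ then $4\mid\sqrt q+1$, an element $a$ with $a^2=-1$ (which exists, $-1$ being a square) does satisfy $a^{\sqrt q+1}=1$, and choosing $c=x_2(1-a)$ places $P^h$ on $P^\perp$; so the statement genuinely fails in that case, which is why the other subsection resorts to a different subgroup. Everything else is the routine algebra of evaluating $P^h$ and standard square/non-square bookkeeping, which works precisely because $-1\in\Box_q$.
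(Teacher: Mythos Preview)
Your proof is correct and follows essentially the same line as the paper's: reduce to one representative via the orbit method, write the incidence condition as $(x_2(a-1)+c)^2=-m(a^2+1)$, and use Lemma~\ref{tec} together with the order-$4$ argument ruling out $a^2=-1$ to contradict the non-squareness of $-m$. The only difference is cosmetic: the paper first counts the $K$-orbits on $\cI$ and normalises the representative to $P=(1,0,w)$ on the line $X_2=0$ (so its equation becomes $c^2w+a^2+1=0$), whereas you work with a general internal point $(x_1,x_2,1)$ and complete the square, arriving at the same contradiction.
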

\begin{proof}
Let $\ell$ be the line having equation $X_2 = 0$. Let $w \notin \Box_{q}$ and consider the internal point $P = (1, 0, w) \in \ell$. An easy calculation shows that the stabilizer of $\ell$ in $K$ is a group $K_\ell$ of order $\sqrt{q}+1$ obtained from $K$ by putting $c=0$. In particular, under the action of $K_\ell$, the $(q-1)/2$ internal points of $\ell$ are permuted in $\sqrt{q}-1$ orbits of size $(\sqrt{q}+1)/2$ each. It follows that, under the action of $K$, the set of $q(q-1)/2$ internal points split into $\sqrt{q}-1$ orbits of size $q(\sqrt{q}+1)/2$ each. Without loss of generality, we only need to prove that $|P^K \cap P^\perp| = 0$, where $P^K = \{ (a^2+w c^2, c w, w) \,\, | \,\, a, c \in \Fq, a^{\sqrt{q}+1} = 1 \}$. The polar line of $P$ has equation $P^\perp: w X_1 + X_3 = 0$. Assume, by way of contradiction, that $|P^K \cap P^\perp| \neq 0$. Then there would exist $a,c \in \Fq$, with $a^{\sqrt{q}+1} = 1$, such that 
$$
c^2 w + a^2 + 1 = 0.
$$
If $c \neq 0$, then $w = -(a^2+1)/c^2$. By Lemma \ref{tec}, it follows that $-(a^2+1)/c^2 \in \Box_{q}$. Hence $w \in \Box_{q}$ a contradiction.
If $c = 0$, we would have 
$$
\left\{
\begin{array}{l}
a^{\sqrt{q}+1} = 1 \\
a^2 = -1 \\
\end{array}
\right. ,
$$
which is impossible, since $(\sqrt{q}+1)/2$ is odd.
\end{proof}
\begin{cor}\label{cor2}
If $q$ is an even power of an odd prime and $\sqrt{q} \equiv 1 \pmod 4$, then 
$$
\alpha(ER_q) \ge \frac{q^{3/2}+q}{2}+q+1.
$$
\end{cor}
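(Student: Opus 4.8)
The plan is to follow the proof of Corollary~\ref{cor1} almost verbatim, replacing the $G$--orbit supplied by Theorem~\ref{goodorbit} with the $K$--orbit supplied by the theorem immediately above.

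First I would fix a $K$--orbit $\cO$ of internal points of $\cC$. By the preceding theorem such an orbit exists, has size $q(\sqrt{q}+1)/2$, and satisfies $|\cO \cap P^\perp| = 0$ for every $P \in \cO$; it genuinely consists of internal points, since in that theorem the $(q-1)/2$ internal points lying on the line $X_2 = 0$ are shown to split into $K$--orbits of exactly this size. I would then take as candidate coclique
$$
S \;=\; \cC \,\cup\, \cO ,
$$
the union of the $q+1$ absolute points of $\perp$ with $\cO$; the two parts are disjoint because every point of $\cC$ is absolute while $\cO \subseteq \cI$.

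Next I would check that $S$ is independent in $ER_q$ by going through the three possible adjacencies. Two absolute points are never conjugate, since the polar line of an absolute point $P$ is the tangent to $\cC$ at $P$, meeting $\cC$ only in $P$. An absolute point $P$ is not adjacent to any internal point either: $P^\perp$ is a tangent line, and a tangent line to $\cC$ contains no internal point (internal points lie on no tangent of $\cC$), so besides $P$ it carries only external points. Finally, no two internal points of $\cO$ are adjacent, by the theorem just proved. Hence $S$ is a coclique, and
$$
\alpha(ER_q) \;\ge\; |S| \;=\; (q+1) + \frac{q(\sqrt{q}+1)}{2} \;=\; \frac{q^{3/2}+q}{2} + q + 1 ,
$$
which is the asserted bound.

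I do not expect a genuine obstacle here. The delicate point --- that the equation $c^2 w + a^2 + 1 = 0$ has no solution with $a^{\sqrt{q}+1} = 1$, handled via Lemma~\ref{tec} when $c \neq 0$ and via the parity of $(\sqrt{q}+1)/2$ when $c = 0$ --- is exactly what the preceding theorem establishes. This corollary is merely the bookkeeping step that glues the absolute points and the orbit $\cO$ into one independent set and reads off its size, in complete analogy with Corollary~\ref{cor1}.
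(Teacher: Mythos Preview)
Your proof is correct and follows exactly the same approach as the paper: take the union of the $q+1$ absolute points with a $K$--orbit of internal points, and verify (via tangent lines meeting $\cC$ only in absolute and external points, together with the preceding theorem) that this union is independent. The paper's own proof is a one--line reference back to Corollary~\ref{cor1}; you have simply written out the details.
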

\begin{proof}
	Similarly as in the proof of Corollary \ref{cor1}, we construct the independent set by taking the union of the $q+1$ absolute points and a $K$-orbit of internal points.
\end{proof}

\subsection{$q$ even non--square}\label{even}

If $q$ is even, the set of absolute points of a pseudo polarity of $\PG(2,q)$ forms a line, say $\ell$. Without loss of generality, we may assume that $\ell$ is the line having equation $X_1 = 0$. Let $\perp$ denote the pseudo polarity of $\PG(2,q)$ such that for a point $P = (x_1, x_2, x_3)$, its {\em polar line} is the line having equation $P^\perp: x_1 X_1 + x_3 X_2 + x_2 X_3 = 0$. Let $H$ be the subgroup of $\PGL(3,q)$ leaving the polarity $\perp$ invariant (i.e., $h \in H$ if and only if ${(R^h)}^\perp = {(R^\perp)}^h$). We shall find it helpful to work with the elements of $\PGL(3,q)$ as matrices in $\GL(3,q)$ and the points of $\PG(2,q)$ as column vectors, with matrices acting on the left. 

\begin{lemma}\cite[Lemma 8.3.6]{H}
$H \cong \PGL(2,q)$ and the isomorphism is given by
$$
\left(
\begin{array}{cc}
a & b \\
c & d \\
\end{array}
\right)
\longleftrightarrow
\left(
\begin{array}{cccc}
1 & 0 & 0 \\
0 & a & b \\
0 & c & d \\
\end{array}
\right),
$$
with $a, b, c, d \in \Fq$, $ad + bc = 1$.
\end{lemma}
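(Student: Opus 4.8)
The plan is to realize $H$ as the group of projective similitudes of the symmetric matrix underlying $\perp$, and then to read off from the similitude equation that every element of $H$, after rescaling, has exactly the displayed block shape. The polar line $P^\perp$ of $P=(x_1,x_2,x_3)$ has coefficient vector $MP$ with $M=\left(\begin{smallmatrix}1&0&0\\0&0&1\\0&1&0\end{smallmatrix}\right)$, so a collineation represented by $A\in\GL(3,q)$ lies in $H$ — i.e.\ satisfies ${(R^A)}^\perp={(R^\perp)}^A$ for every point $R$ — exactly when the vectors $MAR$ and ${(A^{\mathsf T})}^{-1}MR$ are proportional for every $R$. Since two invertible linear maps that are proportional on each vector differ by a global scalar, this is equivalent to $A^{\mathsf T}MA=\lambda M$ for some $\lambda\in\Fq^*$; thus $H$ is the image in $\PGL(3,q)$ of the group of such matrices.

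One inclusion is immediate. For $A_0=\left(\begin{smallmatrix}1&0&0\\0&a&b\\0&c&d\end{smallmatrix}\right)$ with $ad+bc=1$ a one-line computation gives $A_0^{\mathsf T}MA_0=M$, so $A_0\in H$; the map $\phi$ in the statement is a homomorphism because it is block-diagonal, and it is injective because the only scalar matrix of that shape is the identity. Since $q$ is even, $\{\left(\begin{smallmatrix}a&b\\c&d\end{smallmatrix}\right):ad+bc=1\}=\SL(2,q)\cong\PGL(2,q)$, so $\phi$ already embeds $\PGL(2,q)$ into $H$.

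The substance is surjectivity of $\phi$. Write $A=(a_{ij})$ with $A^{\mathsf T}MA=\lambda M$; expanding, $(A^{\mathsf T}MA)_{ij}=a_{1i}a_{1j}+a_{2i}a_{3j}+a_{3i}a_{2j}$. Comparing with $\lambda M$ and using that $q$ is even: the $(2,2)$ and $(3,3)$ entries force $a_{12}^2=a_{13}^2=0$, hence $a_{12}=a_{13}=0$ (geometrically, $H$ fixes the absolute line $X_1=0$ and so its pole $U_1$); the $(1,1)$ entry gives $\lambda=a_{11}^2$, so $a_{11}\neq0$; the $(1,2)$ and $(1,3)$ entries give $a_{21}a_{32}=a_{31}a_{22}$ and $a_{21}a_{33}=a_{31}a_{23}$; and the $(2,3)$ entry gives $a_{22}a_{33}+a_{23}a_{32}=a_{11}^2$. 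Because $a_{12}=a_{13}=0$, expanding $\det A$ along the first row shows the lower-right $2\times2$ block of $A$ is invertible, so its rows $(a_{22},a_{23})$ and $(a_{32},a_{33})$ are linearly independent; since the two relations say $a_{21}(a_{32},a_{33})=a_{31}(a_{22},a_{23})$, we cannot have $a_{21}\neq0$, so $a_{21}=0$, and then $a_{31}(a_{22},a_{23})=(0,0)$ forces $a_{31}=0$. Thus $A$ is block-diagonal, with upper-left entry $a_{11}\neq0$ and a lower-right block of determinant $a_{11}^2$, and dividing $A$ by $a_{11}$ in $\PGL(3,q)$ yields exactly a matrix $\phi\!\left(\begin{smallmatrix}a&b\\c&d\end{smallmatrix}\right)$ with $ad+bc=1$. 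Together with the previous paragraph this proves $\phi\colon\PGL(2,q)\to H$ is an isomorphism.

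The step I expect to be the main obstacle — really the only non-formal point — is extracting $a_{12}=a_{13}=0$ and then $a_{21}=a_{31}=0$. This is exactly where it matters that $q$ is even, so that $\perp$ is a pseudo and not a symplectic polarity: the surviving diagonal entry $M_{11}=1$ pins down the fixed point $U_1$ and forces the matrices of $H$ into this rigid shape, rather than the larger similitude group one might naively expect.
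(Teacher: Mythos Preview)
Your argument is correct. The paper does not give its own proof of this lemma: it is stated with a citation to \cite[Lemma 8.3.6]{H} and used as a black box, so there is no ``paper's proof'' to compare against. Your write-up supplies a clean, self-contained verification: you identify $H$ with the projective similitude group of $M=\left(\begin{smallmatrix}1&0&0\\0&0&1\\0&1&0\end{smallmatrix}\right)$, check directly that the displayed block matrices satisfy $A_0^{\mathsf T}MA_0=M$, and then show every similitude is of this form by reading off the nine entry equations of $A^{\mathsf T}MA=\lambda M$ in characteristic~$2$. The crucial characteristic-$2$ step---that the $(2,2)$ and $(3,3)$ diagonal entries collapse to $a_{12}^2$ and $a_{13}^2$ because the cross terms double---is exactly the point at which the pseudo polarity differs from the symplectic one, and you handle it correctly. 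The identification $\SL(2,q)\cong\PGL(2,q)$ for $q$ even (trivial scalar intersection, same order) is also right. One small remark: in the step ``we cannot have $a_{21}\neq 0$'', the case $a_{31}=0$ technically needs a word (then $a_{21}(a_{32},a_{33})=0$ forces the second row of the block to vanish), but the conclusion is unaffected.
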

We introduce the following definition. In $\PG(2,q)$, a {\em maximal arc} $\cA$ of {\em degree} $n$ is a subset consisting of $(n-1)q+n$ points of the plane such that every line meets $\cA$ in $0$ or $n$ points, for some $n$. 

In the following, $Tr$ will denote the usual {\em absolute trace function} from $\Fq$ to $\mathbb{F}_2$. Let $\alpha \in \Fq$ such that $Tr(\alpha) = 1$. Then the polynomial $X^2 + X +\alpha = 0$ is irreducible over $\Fq$. Let $\lambda \in \Fq$ and consider the conic given by $\cC_\lambda : X_2^2 + X_2 X_3 + \alpha X_3^2 + \lambda X_1^2 = 0$. Then the set $\{\cC_\lambda \,\, | \,\, \lambda \in \Fq \} \cup \{\ell\}$ forms a pencil $\cF$ giving rise to a partition of the points of the plane. Every conic $\cC_\lambda$, $\lambda \neq 0$, in the pencil $\cF$ has as nucleus the point $\cC_0 = U_1$. The pencil $\cF$ is stabilized by the following cyclic group of order $q+1$; the orbits being the conics of the pencil,
$$
C = \left\{ 
\left(
\begin{array}{cccc}
1 & 0 & 0 \\
0 & a & \alpha b \\
0 & b & a+b \\
\end{array}
\right) \;\; | \;\; a^2+ab+\alpha b^2 = 1 \right\} .
$$
In \cite{D}, R. H. F. Denniston proved that if $A$ is an additive subgroup of $\Fq$ of order $n$, then the set of points of all $\cC_\lambda$, for $\lambda \in A$, form a maximal arc of degree $n$ in $\PG(2,q)$. On the other hand, from \cite[Theorem 2.2]{AL}, if a maximal arc in $\PG(2,q)$ is invariant under a linear collineation group of $\PG(2,q)$ which is cyclic and has order $q+1$, then it is a Denniston maximal arc.
In the case when $q$ is even, the current best bound for the independence number of $ER_q$ is in \cite{HW, MW}. If $q$ is an even power of $2$, then 
\begin{equation} \label{b}
q^{3/2}-q+\sqrt{q} \le \alpha(ER_q) \le q^{3/2}-q+\sqrt{q} + 1 .
\end{equation}
The upper bound in \eqref{b} comes from \cite{HW}, while the lower bound comes from $\cite{MW}$. In \cite{MW}, the authors show the existence of a maximal arc $\cA$ of Denniston type of degree $\sqrt{q}$ of $\PG(2,q)$, such that the points of $\cA$ correspond to a coclique of $ER_q$.
On the other hand, from \cite{MW}, if $q$ is an odd power of $2$, then 
\begin{equation} 
\alpha(ER_q) \ge \frac{q^{3/2}}{2 \sqrt{2}} .
\end{equation}
Here we give a proof for an improved lower bound:
$$
\alpha(ER_q) \ge \frac{q^{3/2}}{\sqrt{2}}-q+\sqrt{\frac{q}{2}}.
$$
In this case we will show the existence of a maximal arc $\cA$ of Denniston type of degree $\sqrt{q/2}$ of $\PG(2,q)$, such that the points of $\cA$ correspond to a coclique of $ER_q$.
First, we prove the following lemma.
\begin{lemma}\label{conics}
If $\lambda \in \Fq$, with $Tr(\lambda) = 0$, then for every point $R \in \cC_{\lambda^2}$, $| R^\perp \cap \cC_{\lambda^2} | = 0$.
\end{lemma}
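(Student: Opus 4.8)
The plan is to combine the orbit-reduction principle from the beginning of Section~2 with the action of the cyclic group $C$ of order $q+1$. First I would record that $C \le H$: writing a generic element of $C$ in the matrix shape used to describe $H$, the correspondence $(a',b',c',d') = (a, \alpha b, b, a+b)$ turns the defining relation $a^2+ab+\alpha b^2 = 1$ of $C$ into the relation $a'd'+b'c' = 1$ of $H$, so $C$ leaves the pseudo polarity $\perp$ invariant. Since the orbits of $C$ on the points of $\PG(2,q)$ are precisely the conics of the pencil $\cF$, the set $\cC_{\lambda^2}$ is a single $C$-orbit when $\lambda \ne 0$ (and $\cC_0 = \{U_1\}$ is a single point, so the case $\lambda = 0$ is immediate: $U_1 \notin U_1^\perp = \ell$). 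By the reduction argument it therefore suffices to produce one point $R_0 \in \cC_{\lambda^2}$ with $|R_0^\perp \cap \cC_{\lambda^2}| = 0$.

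For the representative I would choose the point of $\cC_{\lambda^2}$ on the line $X_3 = 0$. Setting $X_3 = 0$ in $X_2^2 + X_2 X_3 + \alpha X_3^2 + \lambda^2 X_1^2 = 0$ gives $(X_2 + \lambda X_1)^2 = 0$, hence $R_0 = (1, \lambda, 0) \in \cC_{\lambda^2}$, whose polar line is $R_0^\perp : X_1 + \lambda X_3 = 0$. Substituting $X_1 = \lambda X_3$ into the equation of $\cC_{\lambda^2}$ and collecting terms yields
$$
X_2^2 + X_2 X_3 + (\alpha + \lambda^4) X_3^2 = 0 .
$$
A common point with $X_3 = 0$ would force $X_2 = 0$ and then $X_1 = \lambda X_3 = 0$, which is not a point; so on every common point we may put $t = X_2/X_3$ and are left with $t^2 + t + (\alpha + \lambda^4) = 0$.

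The heart of the matter is that this quadratic has no root in $\Fq$: the absolute trace is additive and invariant under the Frobenius map, so $Tr(\alpha + \lambda^4) = Tr(\alpha) + Tr(\lambda^4) = Tr(\alpha) + Tr(\lambda) = 1 + 0 = 1$, where we used $Tr(\alpha) = 1$ and the hypothesis $Tr(\lambda) = 0$. Hence $t^2 + t + (\alpha + \lambda^4)$ is irreducible over $\Fq$, so $R_0^\perp \cap \cC_{\lambda^2} = \emptyset$, and the reduction principle upgrades this to $|R^\perp \cap \cC_{\lambda^2}| = 0$ for every $R \in \cC_{\lambda^2}$. I expect the only mildly delicate step to be picking the orbit representative wisely — taking the point on $X_3 = 0$ makes both the polar line and the resulting quadratic essentially free to compute — after which the whole statement collapses to the single trace identity above, and this is exactly the place where the hypothesis $Tr(\lambda) = 0$ enters.
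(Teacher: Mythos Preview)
Your proof is correct and follows essentially the same route as the paper: both pick the representative $R_0 = (1,\lambda,0)$, compute $R_0^\perp : X_1 + \lambda X_3 = 0$, reduce the intersection with $\cC_{\lambda^2}$ to the quadratic $t^2 + t + (\alpha + \lambda^4) = 0$, and finish with the trace identity $Tr(\alpha + \lambda^4) = 1 + Tr(\lambda) = 1$. The only cosmetic difference is that the paper parametrizes $\cC_{\lambda^2}$ via the $C$-orbit $\{(1,\lambda a,\lambda b) : a^2+ab+\alpha b^2 = 1\}$ rather than substituting into the conic equation, and you are a bit more explicit in verifying $C \le H$ and invoking the orbit-reduction principle.
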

\begin{proof}
With the notation introduced above, the point $P_\lambda = (1, \lambda, 0) \in \cC_{\lambda^{2}} = \{(1, \lambda a, \lambda b) \;\; | \;\; a^2 + ab + \alpha b^2 = 1 \}$. Hence, $P_\lambda^C = \cC_{\lambda^2}$. The line $P_\lambda^\perp$ has equation $X_1 + \lambda X_3 = 0$. We consider the intersection $P_\lambda^\perp \cap \cC_{\lambda^2}$. If $\lambda = 0$, the assertion is trivial. The point $(1, \lambda a, \lambda b) \in \cC_{\lambda^2}$, $\lambda \neq 0$, belongs to $P_{\lambda}^\perp$ if and only if $b = 1 / \lambda^2$ and $a=x / \lambda^2$, where $x$ is a solution of 
\begin{equation} \label{ee}
X^2 + X + (\alpha + \lambda^4) = 0.
\end{equation}
The equation in \eqref{ee} has two or zero solutions according as $Tr(\alpha + \lambda^4) = 0$ or $1$ respectively. On the other hand, $Tr(\alpha + \lambda^4) = 1$ if and only if $Tr(\lambda^4) = 0$ if and only if $Tr(\lambda) = 0$.
\end{proof}
\begin{theorem}
If $q$ is an odd power of $2$, then there exists a maximal arc $\cA$ of Denniston type of degree $\sqrt{q/2}$ of $\PG(2,q)$, such that for every point $P \in \cA$, $| P^\perp \cap \cA | = 0$.
\end{theorem}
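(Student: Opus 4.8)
Write $q=2^{m}$ with $m$ odd, so that $\sqrt{q/2}=2^{(m-1)/2}$ is a power of $2$. The plan is to find an additive subgroup $A\le(\Fq,+)$ with $|A|=\sqrt{q/2}$ and to set $\cA=\bigcup_{\mu\in A}\cC_\mu$. By Denniston's construction recalled above, $\cA$ is then a maximal arc of degree $|A|=\sqrt{q/2}$, and since $\cA$ is invariant under the cyclic group $C$ of order $q+1$ it is of Denniston type by \cite[Theorem 2.2]{AL}. Moreover $\cA$ is disjoint from the line $\ell$ of absolute points, since the conics $\cC_\mu$ lie in the partition of $\PG(2,q)$ determined by $\cF$. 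So everything reduces to choosing $A$ so that no two points of $\cA$ are conjugate under $\perp$.

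First I would turn the coclique condition into an arithmetic one, exploiting the group $C$. A direct check shows $C\le H$ (the lower right $2\times2$ block of a matrix of $C$ has ``$H$-determinant'' $a(a+b)+\alpha b^{2}=a^{2}+ab+\alpha b^{2}=1$), so $C$ preserves conjugacy, and by the proof of Lemma \ref{conics} the group $C$ is transitive on $\cC_\mu$ for each $\mu\neq0$. Hence, to decide whether a point of $\cC_{\mu_1}$ is conjugate to a point of $\cC_{\mu_2}$, it suffices to test one representative of $\cC_{\mu_2}$: take $P'=(1,\sqrt{\mu_2},0)$, with polar line $P'^{\perp}:X_1+\sqrt{\mu_2}\,X_3=0$ (and $P'=U_1$, $P'^{\perp}:X_1=0$, if $\mu_2=0$). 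If $\mu_1=0$ then $\cC_{\mu_1}=\{U_1\}$ and $U_1\notin P'^{\perp}$; the case $\mu_2=0$ is symmetric. If $\mu_1,\mu_2\neq0$, substituting the parametrization $\cC_{\mu_1}=\{(1,\sqrt{\mu_1}\,a,\sqrt{\mu_1}\,b)\;:\;a^{2}+ab+\alpha b^{2}=1\}$ into the equation of $P'^{\perp}$ forces $b=(\sqrt{\mu_1\mu_2})^{-1}$ and, after clearing denominators exactly as in the proof of Lemma \ref{conics}, reduces the conic equation to $y^{2}+y+(\alpha+\mu_1\mu_2)=0$ with $y=\sqrt{\mu_1\mu_2}\,a$. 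This equation is solvable iff $Tr(\alpha+\mu_1\mu_2)=0$, i.e., using $Tr(\alpha)=1$ and $Tr(x^{2})=Tr(x)$, iff $Tr(\sqrt{\mu_1}\sqrt{\mu_2})=1$. Thus $\cA$ is a coclique of $ER_q$ precisely when $Tr(\sqrt{\mu_1}\sqrt{\mu_2})=0$ for all $\mu_1,\mu_2\in A$.

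It remains to produce such an $A$. Put $B=\{\sqrt{\mu}\;:\;\mu\in A\}$; since $x\mapsto x^{2}$ is an $\mathbb{F}_2$-linear automorphism of $\Fq$, it is equivalent to find an $\mathbb{F}_2$-subspace $B\le\Fq$ with $\dim_{\mathbb{F}_2}B=(m-1)/2$ that is totally isotropic for the nondegenerate symmetric bilinear form $\beta(x,y)=Tr(xy)$ on $\Fq\cong\mathbb{F}_2^{m}$ (then $A=\{b^{2}:b\in B\}$ has order $|B|=\sqrt{q/2}$). Here is the one step that needs care. Since $\beta(x,x)=Tr(x^{2})=Tr(x)$, every totally isotropic subspace lies in the hyperplane $W_0=\ker Tr$, on which $\beta$ restricts to an \emph{alternating} form; its radical is $W_0\cap W_0^{\beta}$, and since $W_0$ is the $\beta$-orthogonal complement of $\langle1\rangle$, nondegeneracy of $\beta$ gives $W_0^{\beta}=\langle1\rangle$, with $1\notin W_0$ because $Tr(1)=m\equiv1\pmod2$. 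Hence $\beta|_{W_0}$ is a nondegenerate alternating form on a space of even dimension $m-1$, so it admits a totally isotropic subspace of dimension $(m-1)/2$; any such subspace serves as $B$, and $A=\{b^{2}:b\in B\}$ completes the construction. The main obstacle is thus not geometric — the reduction via $C$ together with Lemma \ref{conics} disposes of that — but this symplectic bookkeeping, and in particular the observation that the oddness of $m$ is exactly what makes $\beta|_{W_0}$ nondegenerate (for $m$ even one gets instead a one-dimensional radical, consistent with $\sqrt{q/2}$ then not being an integer).
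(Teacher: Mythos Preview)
Your proof is correct and follows essentially the same route as the paper: reduce the coclique condition, via the $C$-action and the computation of Lemma~\ref{conics}, to the arithmetic requirement $Tr(\lambda_1\lambda_2)=0$ on the set of square-roots of the parameters, and then realise this set as a maximal totally isotropic subspace for the trace form restricted to the hyperplane $\ker Tr$. Your treatment of the nondegeneracy of $\beta|_{W_0}$ (using $W_0^{\beta}=\langle 1\rangle$ and $Tr(1)=m\bmod 2$) is more explicit than the paper's, and you fold the diagonal case $\mu_1=\mu_2$ into the same bilinear-form framework rather than invoking Lemma~\ref{conics} separately, but these are presentational rather than substantive differences; the appeal to \cite{AL} is also unnecessary since Denniston's construction already gives the arc directly once $A$ is an additive subgroup.
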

\begin{proof}
From Lemma \ref{conics}, there are $q/2-1$ possibilities for $\lambda \neq 0$ such that, for every point $R \in P_{\lambda}^C$, we have that $| R^\perp \cap P_{\lambda}^C | = 0$. Let $\lambda_1, \lambda_2$ be two distinct non--zero elements of $\Fq$ such that $Tr(\lambda_1) = Tr(\lambda_2) = 0$. We consider the intersection $P_{\lambda_1}^\perp \cap \cC_{\lambda_2^2}$. Again, the point $(1, \lambda_2 a, \lambda_2 b) \in \cC_{\lambda_2^2}$ belongs to $P_{\lambda_1}^\perp$ if and only if $b = 1/\lambda_1 \lambda_2$ and $a=x / \lambda_1 \lambda_2$, where $x$ is a solution of 
\begin{equation} \label{ee1}
X^2 + X + (\alpha + \lambda_1^2 \lambda_2^2) = 0 .
\end{equation}
The equation in \eqref{ee1} has two or zero solutions according as $Tr(\alpha + \lambda_1^2 \lambda_2^2) = 0$ or $1$ respectively. On the other hand, $Tr(\alpha + \lambda_1^2 \lambda_2^2) = 1$ if and only if $Tr(\lambda_1^2 \lambda_2^2) = 0$ if and only if $Tr(\lambda_1 \lambda_2) = 0$. Let $q = 2^n$, $n$ odd. To conclude the proof we need to show the existence of a subset $N \subset \Fq$, with $| N | = 2^{(n-1)/2}$, such that for every $x_1, x_2 \in N$, we have that $Tr(x_1 x_2) = 0$. In order to do that, we note that it is possible to identify the non--zero elements of $\mathbb{F}_{2^n}$ with $\PG(n-1, 2)$ and, in this setting, the absolute trace function
$$
(x_1, x_2) \in \mathbb{F}_2^n \times \mathbb{F}_2^n \longmapsto Tr(x_1 x_2) \in \mathbb{F}_2
$$  
defines a non--degenerate symmetric bilinear form $f$ on $\mathbb{F}_2^n$. Since $n$ is odd, the form $f$ gives rise to a pseudo polarity $p$ of $\PG(n-1,2)$. This means that we have a hyperplane $H$ of absolute points, i.e. points $x \in \PG(n-1,2)$ such that $Tr(x)=0$. When restricting the polarity $p$ to this hyperplane $H$, we obtain a symplectic polarity $p|_H$. Let $M$ be a maximal subspace of $H$ consisting of absolute points with respect to $p|_H$ and let $N'$ be the set of elements of $\mathbb{F}_2^n$ corresponding to points of $M$. It is known that $M$ is a projective subspace of dimension $(n-3)/2$, from which follows that $N \coloneqq N' \cup \{ 0 \}$ is a subset of $\GF(2^n)$ of size $2^{(n-1)/2}$ having the required property. 

Lastly, we note that $A = \left\{\lambda^2 \,\, | \,\, \lambda \in N \right\}$ is an additive subgroup of $\Fq$. Therefore, the set of points of all $\cC_\lambda$, with $\lambda \in A$, is indeed a maximal arc of Denniston type of degree $|A| = \sqrt{q/2}$.
\end{proof}

\begin{cor}\label{coreven}
If $q$ is an odd power of $2$, then 
$$
\alpha(ER_q) \ge \frac{q^{3/2}}{\sqrt{2}}-q+\sqrt{\frac{q}{2}}
$$
\end{cor}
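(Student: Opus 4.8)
The plan is to read off the bound directly from the maximal arc produced by the previous theorem. Let $\cA$ be the Denniston maximal arc of degree $\sqrt{q/2}$ constructed there, so that $|P^\perp \cap \cA| = 0$ for every point $P \in \cA$. By definition a maximal arc of degree $d$ in $\PG(2,q)$ has exactly $(d-1)q+d$ points; taking $d=\sqrt{q/2}$ gives
$$
|\cA| = \left(\sqrt{\tfrac q2}-1\right)q + \sqrt{\tfrac q2} = \frac{q^{3/2}}{\sqrt 2} - q + \sqrt{\frac q2}.
$$

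It then remains to check that $\cA$ is a coclique of $ER_q$. Since $ER_q$ is the pseudo polarity graph of $\PG(2,q)$, two distinct points $P_1,P_2$ are adjacent if and only if $P_1\in P_2^\perp$. For $P\in\cA$, the equality $|P^\perp\cap\cA|=0$ says that the polar line $P^\perp$ meets $\cA$ in no point; in particular $P\notin P^\perp$, so $P$ is not absolute, and $P$ is not conjugate to any other point of $\cA$. As this holds for every $P\in\cA$, the set $\cA$ induces an independent set in $ER_q$, and therefore $\alpha(ER_q)\ge|\cA|=\frac{q^{3/2}}{\sqrt2}-q+\sqrt{\frac q2}$.

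There is essentially no obstacle here: the substantive work was carried out in the theorem, and this corollary merely packages its conclusion as a lower bound on the independence number, together with the standard count of the number of points on a maximal arc of prescribed degree. The only thing one might flag is that the degree $\sqrt{q/2}$ is a genuine integer dividing $q$ — namely $2^{(m-1)/2}$ when $q=2^m$ with $m$ odd — hence a legitimate Denniston degree; but this was already implicit in the statement of the theorem.
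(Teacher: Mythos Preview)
Your proof is correct and matches the paper's approach exactly: the corollary is stated there without proof, being an immediate consequence of the preceding theorem together with the standard point count $(d-1)q+d$ for a maximal arc of degree $d=\sqrt{q/2}$. The paper even remarks beforehand that the goal is to exhibit such a maximal arc whose points form a coclique of $ER_q$, which is precisely what you verify.
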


\begin{remark}
The number of lines disjoint from the maximal arc $\cA$ equals $\sqrt{2q}(q-\sqrt{q/2}+1)$, which is more than the number of disjoint lines of type $P^\perp$, for some $P \in \cA$. Therefore, there are $\sqrt{q/2}(q+1)$ points such that their polar line is disjoint from $\cA$ and so each of them can be added to enlarge the coclique. 
\end{remark}

\section{On the independence number of the Erd\H os--R\'enyi hypergraph of triangles}

A \emph{hypergraph} $\Gamma$ is a family of distinct subsets of a finite set. The members of $\Gamma$ are called {\em edges}, and the elements of $V(\Gamma) = \bigcup_{E \in \Gamma} E$ are called {\em vertices}. If all edges in $\Gamma$ have size $r$, then $\Gamma$ is called an $r$--{\em uniform hypergraph} or, simply, $r$--{\em graph}. For example, a $2$--graph is a graph in the usual sense. A vertex $v$ and an edge $E$ are called \emph{incident} if $v \in E$. The \emph{degree} of a vertex $v$ of $\Gamma$, denoted $d(v)$, is the number of edges of $\Gamma$ incident with $v$. 

For $k \geq 2$, a \emph{cycle} of length $k$ in a hypergraph $\Gamma$ is an alternating sequence of vertices and edges of the form $v_1, E_1, v_2, E_2, \dots, v_k, E_k, v_1$ such that
\begin{enumerate}
	\item $v_1, v_2, \dots, v_k$ are distinct vertices of $\Gamma$
	\item $E_1, E_2, \dots, E_k$ are distinct edges of $\Gamma$
	\item $v_i,v_{i+1} \in E_i$ for each $i \in \left\{1,2,\dots,k-1\right\}$ and $v_k,v_1 \in E_k$.
\end{enumerate}
The {\em girth} of a hypergraph $\Gamma$, containing a cycle, is the minimum length of a cycle in $\Gamma$. 

The \emph{generalized Tur\'an number} $T_r(n, k, l)$ is defined to be the maximum number of edges in an $r$--graph on $n$ vertices in which no set of $k$ vertices spans $l$ or more edges. The asymptotic behaviour of the numbers $T_r(n, k, l)$, in general, is unknown, and seems to be difficult to determine, see \cite{C}. The value $T_3(n, 8, 4)$ gives the maximum number of edges in a $3$--graph of girth five. This is seen by directly checking that any four triples on a set of eight vertices span a hypergraph containing a cycle of length at most four. 

In \cite{LV}, the authors studied a hypergraph $\cH_q$ of girth $5$ constructed from $ER_q$. The hypergraph $\cH_q$ is the $3$--graph whose vertex set is the set of non--absolute points of $V(ER_q)$ and edge set is the set of triangles in $ER_q$. These hypergraphs were used to determine the asymptotics of the Tur\'an number $T_3(n,8,4)$. It follows that $\alpha(\cH_q)$ is the order of the largest triangle--free induced subgraph of $ER_q$ which contains no absolute points. 

In \cite{P}, Parsons constructs a triangle--free induced subgraph of $ER_q$, $q$ odd, which contains no absolute points and has either $q(q+1)/2$ or $q(q-1)/2$ vertices according as $q \equiv -1\pmod 4$ or $q \equiv 1\pmod 4$, respectively. See also \cite[Theorem 8.3.4]{H}. From \cite[Theorem 8.3.5]{H}, the hypergraph $\cH_q$ contains $q(q^2-1)/6$ edges. The next result shows the existence of a triangle--free induced subgraph $\cS$ of $ER_q$, $q$ even, which contains no absolute points and has $q(q+1)/2$ vertices. As a consequence we establish the asymptotic tightness of the following bound determined in \cite[Theorem 8]{MW}: 
$$
\alpha(\cH_q) \le \frac{q^2}{2} + q^{3/2} + O(q) .
$$
Let $q$ be even and let $\perp$ denote the pseudo polarity of $\PG(2,q)$ described in Section $\ref{even}$. In order to construct $\cS$, with the notation introduced in Section \ref{even}, we consider the orbits of the cyclic group $C$ of order $q+1$ on lines of $\PG(2,q)$. Since the $C$--orbit of a point $R$ not belonging to $\ell \cup \{ U_1 \}$ is a conic and $C$ leaves the polarity $\perp$ invariant, we have that the $C$--orbit of a line $r$ not containing $\{ U_1 \}$ and distinct from $\ell$ is a dual conic. Each such a dual conic $\cD$ left invariant by the group $C$, has as dual nucleus the line $\ell$. This means that $\cD$ consists of $q+1$ lines such that every point of $\PG(2,q) \setminus \ell$ is contained in either $0$ or $2$ lines of $\cD$ and every point of $\ell$ is contained in exactly one line of $\cD$. From Lemma \ref{conics}, there are $q/2-1$ possibilities for $\lambda \neq 0$ such that, for every point $R \in P_{\lambda}^C$, we have that $| R^\perp \cap P_{\lambda}^C | = 0$. We want to prove that if $Tr(\lambda) = 0$, with $\lambda \neq 0$, then the $q(q+1)/2$ points of $\PG(2,q) \setminus \ell$ covered by $({P_\lambda^\perp})^C$ can be chosen as vertices of $\cS$.
  
\begin{theorem} \label{triangle}
If $q$ is even, there exists a triangle--free induced subgraph of $ER_q$, $q$ even, which contains no absolute points and has $q(q+1)/2$ vertices.
\end{theorem}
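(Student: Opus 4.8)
The plan is to take the set $\cS$ of $q(q+1)/2$ points of $\PG(2,q)\setminus\ell$ covered by the dual conic $(P_\lambda^\perp)^C$, for a fixed $\lambda\neq 0$ with $Tr(\lambda)=0$, and to show that the induced subgraph of $ER_q$ on $\cS$ is triangle-free. By construction $\cS$ contains no absolute points, since every point of $\ell$ is avoided. The set $\cS$ is a union of $q+1$ lines forming a dual conic with dual nucleus $\ell$, so each point of $\PG(2,q)\setminus\ell$ lies on $0$ or $2$ of these lines; in particular $|\cS| = (q+1)\cdot q/2 = q(q+1)/2$, which accounts for the stated size, and each point of $\cS$ lies on exactly two of the lines $(P_\lambda^\perp)^g$, $g\in C$.

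First I would record the group-theoretic reduction: since $C$ leaves $\perp$ invariant and acts transitively on the $q+1$ lines of the dual conic, it suffices to check that no triangle of $ER_q$ has all three vertices in $\cS$ and passes through a fixed vertex in a controlled position — but in fact it is cleaner to argue directly. Suppose $R_1,R_2,R_3\in\cS$ form a triangle in $ER_q$, i.e.\ $R_i\in R_j^\perp$ for all $i\neq j$. The key structural fact is that $R_j^\perp$ is itself a line, and the three polar lines $R_1^\perp,R_2^\perp,R_3^\perp$ are pairwise incident with the opposite vertex; applying $\perp$, the points $R_1,R_2,R_3$ are the vertices of a self-polar triangle (each $R_i = R_j^\perp\cap R_k^\perp$). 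Now I would use Lemma \ref{conics}: the orbit $P_\lambda^C = \cC_{\lambda^2}$ is a conic, and the key consequence of that lemma is that the lines $R^\perp$ for $R\in\cC_{\lambda^2}$ each miss $\cC_{\lambda^2}$; dually, the $q+1$ lines of the dual conic $(P_\lambda^\perp)^C$ are exactly $\{R^\perp : R\in\cC_{\lambda^2}\}$, so each line $m$ of the dual conic satisfies $m^\perp\in\cC_{\lambda^2}$ and $m^\perp\notin\cS$.

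The heart of the argument is then a counting/incidence contradiction inside the dual conic. Write $\cS = \bigcup_{m\in\cD} m$ where $\cD = (P_\lambda^\perp)^C$ is the dual conic. Each $R_i\in\cS$ lies on exactly two lines of $\cD$, say $R_i\in m_i\cap m_i'$. Because $\cD$ is a dual conic with dual nucleus $\ell$, two distinct lines of $\cD$ meet in a point of $\PG(2,q)\setminus\ell$ lying on no further line of $\cD$, and any point off $\ell$ lying on two lines of $\cD$ is such an intersection point. The condition $R_i\in R_j^\perp$ forces $R_j^\perp$ to be one of the finitely many lines through $R_i$; I would show $R_j^\perp$ cannot be a line of $\cD$ (since $R_j^\perp{}^\perp = R_j\in\cS$ but the polar of a line of $\cD$ lies in $\cC_{\lambda^2}$, disjoint from $\cS$), hence $R_j^\perp$ meets $\cD$ in at most — and, tracking the dual-conic geometry, a bounded number of — points, and the three mutual incidence conditions overdetermine the configuration. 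Concretely, I expect the cleanest route is to transport the whole configuration by $C$ so that $R_1 = P_\lambda = (1,\lambda,0)$, use $R_1^\perp : X_1+\lambda X_3 = 0$, and then solve explicitly for which points of $\cS$ lie on $R_1^\perp$ and show that no two of them are conjugate — reducing again to a trace condition of the same shape as \eqref{ee1}, which fails precisely because of the maximal-isotropic-subspace / $Tr(\lambda)=0$ hypothesis already exploited in the previous section.

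The main obstacle I anticipate is the middle step: pinning down exactly how a line of the form $R^\perp$ ($R\in\cS$) meets the dual conic $\cD$, i.e.\ how many points of $\cS$ a given polar line can contain, and ruling out that two such points are themselves conjugate. This is where Lemma \ref{conics} must be used in its full strength — not just for a single conic $\cC_{\lambda^2}$ but for the interaction between the line $R^\perp$ and the nearby conics of the pencil — and where the even characteristic, the dual-nucleus line $\ell$, and the trace hypothesis $Tr(\lambda)=0$ all have to be combined. Once that incidence count is nailed down, triangle-freeness should follow by the same $C$-homogeneity reduction used throughout Section \ref{even}, and the count $|\cS| = q(q+1)/2$ together with the absence of absolute points gives the claimed subgraph.
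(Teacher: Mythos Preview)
Your setup matches the paper's, and the observation that $R_j^\perp$ cannot itself be a line of $\cD$ is correct, but the proposed reduction is wrong in a way that blocks the whole argument. You cannot ``transport the configuration by $C$ so that $R_1 = P_\lambda$'': first, $|C| = q+1$ while $|\cS| = q(q+1)/2$, so $C$ has $q/2$ orbits on $\cS$ and there is no transitivity to exploit; second, $P_\lambda$ is not even in $\cS$. Indeed, a point $R\in\PG(2,q)\setminus\ell$ lies on two lines of $\cD=(P_\lambda^\perp)^C$ precisely when $R^\perp$ is secant to $\cC_{\lambda^2}$, and Lemma~\ref{conics} says $P_\lambda^\perp\cap\cC_{\lambda^2}=\varnothing$, so $P_\lambda\notin\cS$. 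The vaguer ``counting/incidence contradiction'' you sketch does not become a proof either: knowing only that each $R_i$ lies on exactly two lines of $\cD$ and that $R_j^\perp\notin\cD$ does not overdetermine anything.

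What actually works (and what the paper does) is to turn membership in $\cS$ into an explicit trace condition and then exploit additivity. Writing $R=(1,y,x)$, one checks that $R\in\cS$ if and only if $Tr(\alpha\lambda^2x^2+\lambda^2y^2+\lambda^4x^2y^2)=1$. For an arbitrary $R\in\cS$ and an arbitrary neighbour $U\in R^\perp\cap\cS$, the third vertex of the unique triangle through $R,U$ is $V=R^\perp\cap U^\perp$, and one computes that the membership expression for $V$ equals the sum of the corresponding expressions for $R$ and for $U$ plus the term $\lambda^4$. Taking traces and using $Tr(\lambda^4)=Tr(\lambda)=0$ gives $1+0+0=1$ on one side and forces the trace condition for $V$ to fail, so $V\notin\cS$. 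This algebraic identity is the genuine content; it cannot be replaced by $C$-homogeneity, and your outline does not supply it.
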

\begin{proof}
With the notation introduced in Section \ref{even}, let $P_\lambda = (1,\lambda,0)$, $\lambda \in \Fq \setminus \{ 0 \}$, with 
\begin{equation}\label{tr0}
Tr(\lambda) = 0
\end{equation}
then $P_\lambda^C = \cC_{\lambda^2}$, where $\cC_{\lambda^2}$ is the conic having equation $X_2^2 + X_2 X_3 + \alpha X_3^2 + \lambda^2 X_1^2 = 0$. Let $\cD$ be the dual conic ${P_{\lambda}^\perp}^C$ and let $\cS$ be the set consisting of the $q(q+1)/2$ points of $\PG(2,q) \setminus \ell$ covered by ${P_\lambda^\perp}^C$. Note that a point of $\PG(2,q) \setminus \ell$ lies on two lines of $\cD$ if and only if its polar line under the pseudo polarity is secant to $\cC_{\lambda^2}$. Let $R = (1, y, x) \in \PG(2,q) \setminus (\ell \cup \{ U_1 \})$, then $R^\perp : X_1 + x X_2 + y X_3 = 0$. The line $R^\perp$ contains $2$ points of $\cC_{\lambda^2}$ if and only if the equation
$$
(1+\lambda^2 x^2) X_2^2 + X_2 X_3 + (\alpha + \lambda^2 y^2) X_3^2 = 0
$$  
has two solutions in $\Fq$, i.e., if and only if 
$$
Tr(\alpha + \alpha \lambda^2 x^2 + \lambda^2 y^2 + \lambda^4 x^2 y^2) = 0
$$
if and only if 
\begin{equation}\label{tr}
Tr(\alpha \lambda^2 x^2 + \lambda^2 y^2 + \lambda^4 x^2 y^2) = 1 .
\end{equation}
Let $x,y \in \Fq$ such that \eqref{tr} is satisfied. Then the point $R$ lies on two lines of $\cD$. 

\medskip
\fbox{Case 1): $y \neq 0$}
\medskip

\noindent
Let $U = (1, \mu, (\mu x + 1)/y)$, with $\mu \in \Fq$, be a generic point of $R^\perp \setminus \ell$ and let $U^\perp : X_1 +  \frac{\mu x + 1}{y} X_2 + \mu X_3 = 0$. Then, again, the line $U^\perp$ contains $2$ points of $\cC_{\lambda^2}$ if and only if the equation
$$
\left(1+\lambda^2  \frac{(\mu x + 1)^2}{y^2}\right) X_2^2 + X_2 X_3 + (\alpha + \lambda^2 \mu^2) X_3^2 = 0
$$  
has two solutions in $\Fq$, i.e., if and only if 
\begin{equation}\label{tr1}
Tr\left(\left(1 + \lambda^2  \frac{(\mu x + 1)^2}{y^2}\right)\left(\alpha + \mu^2 \lambda^2\right)\right) = 0 .
\end{equation}
Since $R \not\in \cC_{\lambda^2}$, it follows that $R^\perp$ is not a line of $\cD$. Hence, $R^\perp$ contains $q/2$ points of $\cS$. It turns out that $\mu$ can be chosen in $q/2$ ways such that \eqref{tr1} is satisfied. A straightforward calculation shows that the lines $R^\perp$ and $U^\perp$ intersect in the point $V = (1, \mu + y, (1+\mu x + x y)/y)$. The unique triangle of $ER_q$ containing the points $R$ and $U$ is the triangle having as vertices the points $R, U, V$. Moreover, the point $V$ belongs to $\cS$ if and only if the equation
$$
\left(1+\lambda^2  \frac{(\mu x + 1 + x y)^2}{y^2}\right) X_2^2 + X_2 X_3 + \left(\alpha +  (\mu + y)^2 \lambda^2\right) X_3^2 = 0
$$  
has two solutions in $\Fq$, i.e., if and only if 
\begin{equation}\label{tr2}
Tr\left(\left(1 + \lambda^2  \frac{(\mu x + 1 + x y)^2}{y^2}\right)\left(\alpha + (\mu + y)^2 \lambda^2\right)\right) = 0 .
\end{equation}  
On the other hand, since
$$
\left(1 + \lambda^2  \frac{(\mu x + 1 + x y)^2}{y^2}\right) \left( \alpha + (\mu + y)^2 \lambda^2 \right) = 
$$
$$
\left(1 + \lambda^2  \frac{(\mu x + 1)^2}{y^2}\right) \left( \alpha + \mu^2 \lambda^2 \right) + (\alpha \lambda^2 x^2 + \lambda^2 y^2 + \lambda^4 x^2 y^2) + \lambda^4 , 
$$
taking into account \eqref{tr0}, \eqref{tr}, \eqref{tr1}, it is easily seen that \eqref{tr2} is never satisfied. 

\medskip
\fbox{Case 2): $y = 0$}
\medskip

\noindent
Let $U' = (x, 1, \mu)$, with $\mu \in \Fq$, be a generic point of $R^\perp \setminus \ell$ and let $U'^\perp : x X_1 +  \mu X_2 + X_3 = 0$. Then, again, the line $U'^\perp$ contains $2$ points of $\cC_{\lambda^2}$ if and only if the equation
$$
\left(1+  \frac{\lambda^2 \mu^2}{x^2}\right) X_2^2 + X_2 X_3 + \left(\alpha + \frac{\lambda^2}{x^2}\right) X_3^2 = 0
$$  
has two solutions in $\Fq$, i.e., if and only if 
\begin{equation}\label{tr3}
Tr\left(\left(1+  \frac{\lambda^2 \mu^2}{x^2}\right)\left(\alpha + \frac{\lambda^2}{x^2}\right)\right) = 0 .
\end{equation}
Since $R \not\in \cC_{\lambda^2}$, it follows that $R^\perp$ is not a line of $\cD$. Hence, $R^\perp$ contains $q/2$ points of $\cS$. It turns out that $\mu$ can be chosen in $q/2$ ways such that \eqref{tr3} is satisfied. A straightforward calculation shows that the lines $R^\perp$ and $U'^\perp$ intersect in the point $V' = (x, 1, x^2+\mu)$. The unique triangle of $ER_q$ containing the points $R$ and $U'$ is the triangle having as vertices the points $R, U', V'$. Moreover, the point $V'$ belongs to $\cS$ if and only if the equation
$$
\left(1+\lambda^2 x^2 + \frac{\lambda^2 \mu^2}{x^2}\right) X_2^2 + X_2 X_3 + \left(\alpha + \frac{\lambda^2}{x^2}\right) X_3^2 = 0
$$  
has two solutions in $\Fq$, i.e., if and only if 
\begin{equation}\label{tr4}
Tr\left(\left(1+\lambda^2 x^2 + \frac{\lambda^2 \mu^2}{x^2}\right)\left(\alpha + \frac{\lambda^2}{x^2}\right)\right) = 0 .
\end{equation}  
On the other hand, since
$$
\left(1+\lambda^2 x^2 + \frac{\lambda^2 \mu^2}{x^2}\right)\left(\alpha + \frac{\lambda^2}{x^2} \right) = 
\left(1+ \frac{\lambda^2 \mu^2}{x^2}\right)\left(\alpha + \frac{\lambda^2}{x^2} \right) + \left(\alpha \lambda^2 x^2\right) + \lambda^4 , 
$$
taking into account \eqref{tr0}, \eqref{tr}, \eqref{tr3}, it is easily seen that \eqref{tr4} is never satisfied. 
\end{proof}
\begin{remark}
By construction the subgraph $\cS$ admits the cyclic group $C$ of order $q+1$ as a group of automorphisms.
\end{remark}
\begin{remark}
Theorem \ref{triangle} provides a solution to \cite[Open Problem 3]{MW}.
\end{remark}

\begin{cor}
If $q$ is even, there exists a $\frac{q}{2}$--regular graph on $q(q+1)/2$ vertices of girth at least $5$.
\end{cor}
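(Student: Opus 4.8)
The plan is to take the graph $\cS$ from Theorem~\ref{triangle} without change and verify the two remaining requirements: that it is $q/2$--regular and that its girth is at least $5$. Recall that $\cS$ is the induced subgraph of $ER_q$ on the $q(q+1)/2$ points of $\PG(2,q)\setminus\ell$ covered by the dual conic $\cD=(P_\lambda^\perp)^C$, for a fixed $\lambda\neq 0$ with $Tr(\lambda)=0$, and that Theorem~\ref{triangle} already tells us $\cS$ is triangle--free.

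For the girth I would use the defining feature of a polarity graph: two distinct points $P_1,P_2$ of $\PG(2,q)$ have as common neighbours in $ER_q$ exactly the points of $P_1^\perp\cap P_2^\perp$, and since $P_1^\perp\neq P_2^\perp$ this intersection is a single point. Hence $ER_q$ contains no $C_4$, and the same holds for the induced subgraph $\cS$. Together with triangle--freeness this shows every cycle of $\cS$ has length at least $5$. For the regularity, fix $R\in\cS$. As $R$ is non--absolute, $R\notin R^\perp$, so the neighbours of $R$ in $ER_q$ are exactly the $q+1$ points of $R^\perp$, of which exactly one lies on $\ell$ and hence is not a vertex of $\cS$. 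A point $U\in R^\perp\setminus\ell$ lies in $\cS$ precisely when $U^\perp$ is secant to $\cC_{\lambda^2}$; parametrising $R^\perp\setminus\ell$ by $\mu\in\Fq$, this is condition~\eqref{tr1} (case $y\neq 0$) or~\eqref{tr3} (case $y=0$) in the proof of Theorem~\ref{triangle}, which is satisfied by exactly $q/2$ values of $\mu$, the key point being that $R\notin\cC_{\lambda^2}$ forces $R^\perp$ not to be a line of $\cD$. Thus $|R^\perp\cap\cS|=q/2$ for every $R\in\cS$, i.e.\ $\cS$ is $q/2$--regular, and the corollary follows.

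I do not expect a genuine obstacle: the statement is essentially a repackaging of Theorem~\ref{triangle} together with the standard $C_4$--freeness of polarity graphs. The only point worth a word of justification is that the degree $q/2$ is independent of the chosen vertex; this is clear since the relevant count in the proof of Theorem~\ref{triangle} (in both the $y\neq 0$ and $y=0$ cases) depends only on $R^\perp$ not being a line of $\cD$, a condition that holds for every $R\in\cS$.
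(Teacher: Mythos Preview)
Your proposal is correct and follows essentially the same route as the paper: triangle--freeness from Theorem~\ref{triangle}, $C_4$--freeness from the polarity, and $q/2$--regularity from the count $|R^\perp\cap\cS|=q/2$. The only cosmetic difference is that for regularity you cite the trace conditions \eqref{tr1}, \eqref{tr3} inside the proof of Theorem~\ref{triangle}, whereas the paper argues directly from the dual conic: the $q+1$ lines of $\cD$ each meet $R^\perp$ once, the unique line of $\cD$ through $R^\perp\cap\ell$ accounts for that point, and the remaining $q$ incidences pair up on $R^\perp\setminus\ell$, giving $q/2$ vertices of $\cS$---but this is exactly the geometric justification already used in Theorem~\ref{triangle} for the $q/2$ count you quote, so the arguments coincide.
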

\begin{proof}
Taking into account Theorem \ref{triangle} and the fact that $ER_q$ does not contain $C_4$, it is enough to show that the graph $\cS$ constructed above is regular. Let $R$ be a point of $\PG(2,q)$ corresponding to a vertex of $\cS$, then a point $R'$ distinct from $R$ is adjacent with $R$ if and only if $R' \in (R^\perp \cap r) \setminus \ell$, where $r$ is a line of $\cD$. Since $\cD$ contains $q+1$ lines and through $\ell \cap R^\perp$ there passes exactly one line of $\cD$, we have that there are exactly $q/2$ points in $R^\perp \setminus \ell$ such that through each of them there pass two lines of $\cD$.      
\end{proof}


\end{document}